\newcommand{\nocontentsline}[3]{}
\newcommand{\tocless}[2]{\bgroup\let\addcontentsline=\nocontentsline#1{#2}\egroup}
\newcommand{\ab}{{\mathbf{Ab}}}
\newcommand{\vects}[1]{{\mathbf{Vect}_{#1}}}
\newcommand{\cgv}{{\CG}}
\newcommand{\sav}{{{\CS \CA}_\BQ}}
\newcommand{\omot}{\mathbf{M_1}}
\newcommand{\abgen}[1]{{\langle #1 \rangle}}
\newcommand{\rk}[3]{{\mathrm{rk}_{#1}{\left(#2,#3\right)}}}
\newcommand{\dimq}[1]{{\dim_\BQ{\left(#1\right)}}}
\newcommand{\dimqb}[1]{{\dim_\qb{\left(#1\right)}}}
\newcommand{\la}{\langle}
\newcommand{\ra}{\rangle}
\newcommand{\Hom}[3]{{\mathrm{Hom}_{#1}{\left(#2,#3\right)}}}
\newcommand{\End}[2]{{\mathrm{End}_{#1}{\left(#2\right)}}}
\newcommand{\Ext}[3]{{\mathrm{Ext}^1_{#1}{\left(#2,#3\right)}}}
\newcommand{\Extn}[4]{{\mathrm{Ext}^{#2}_{#1}{\left(#3,#4\right)}}}
\renewcommand{\ker}[1]{{\mathrm{ker}{\left(#1\right)}}}
\newcommand{\img}[1]{{\mathrm{im}{\left(#1\right)}}}
\newcommand{\qb}{\ov{\BQ}}
\newcommand{\surj}{{\ \twoheadrightarrow \ }}
\newcommand{\inj}{{\ \hookrightarrow \ }}
\newcommand{\isom}{{\ \xrightarrow{\sim} \ }}
\newcommand{\hand}{{ \quad \mathrm{and} \quad }}
\newcommand{\seq}{{\ \subseteq \ }}
\newcommand{\ov}[1]{{\overline{#1}}}
\newcommand{\ti}[1]{{\tilde{#1}}}
\newcommand{\BC}{{\mathbb{C}}}
\newcommand{\BG}{{\mathbb{G}}}
\newcommand{\BN}{{\mathbb{N}}}
\newcommand{\BQ}{{\mathbb{Q}}}
\newcommand{\BZ}{{\mathbb{Z}}}
\newcommand{\CA}{{\mathcal{A}}}
\newcommand{\CB}{{\mathcal{B}}}
\newcommand{\CC}{{\mathcal{C}}}
\newcommand{\CG}{{\mathcal{G}}}
\newcommand{\CP}{{\mathcal{P}}}
\newcommand{\CS}{{\mathcal{S}}}
\newcommand{\CT}{{\mathcal{T}}}
\newtheorem{thm}{Theorem}[section]
\newtheorem{cor}[thm]{Corollary}
\newtheorem{lem}[thm]{Lemma}
\newtheorem{prop}[thm]{Proposition}
\newtheorem*{prop*}{Proposition}
\theoremstyle{definition}
\newtheorem{defn}[thm]{Definition}
\newtheorem{rem}[thm]{Remark}
\newtheorem{ex}[thm]{Example}
\newtheorem{para}[thm]{}
\newtheorem{conv}[thm]{Convention}
\title{Ranks of $1$-motives as dimensions of Ext$^1$ vector spaces}
\author{Nicola Nesa}
\email{nicola.nesa@gmail.com}
\date{\today}
\begin{document}

\maketitle
\thispagestyle{empty}

\tocless{\section*}{Abstract}

We reformulate the ``ranks'' that appear in the dimension formula for the linear space of periods of a $1$-motive established in \cite[Theorem 1.4]{HW} in a more conceptual and categorical way, as dimensions of $\mathrm{Ext}^1$ vector spaces. This constitutes the first step towards rewriting the dimension formula purely in general categorical terms, rather than through definitions and computations introduced \textit{ad hoc} for $1$-motives.\\

\tableofcontents

\newpage
\pagestyle{headings}
\setcounter{page}{1}
\section*{Introduction}

\noindent \textbf{Results}

In this article we prove the results that follow this paragraph. The definitions of the various ranks of $1$-motives (``rk'' in the formulas below) are provided in Definition \ref{defn:Rank}, and, more generally, the notation for $1$-motives is recalled in Section \ref{s:recap}. It is worth pointing out that $\abgen{M}$ denotes the category containing the $1$-motives that are subquotients of finite direct sums of copies of $M$ (see Convention \ref{conv:Cats}) and that the Ext$^1$ vector spaces in the formulas below are computed in that category.

\begin{prop*}[{\ref{prop:RankExtOmotBZ}}]
	Let $M=(X,A,T,G,u)$ be a $1$-motive with $A = 0$, $T \neq 0$ and $X \neq 0$. Then
	\[ \rk{\BG_m}{X}{M} = \dimq {\Ext{\la M \ra}{[\BZ]}{[\BG_m]}}.\]
\end{prop*}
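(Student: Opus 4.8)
The plan is to exhibit both sides as the dimension of one and the same $\BQ$-subspace $W$ of $\qb^*\otimes\BQ$. Since $A=0$ we have $M=[X\xrightarrow{u}T]$ with $T$ a torus, and the hypotheses $X\neq0$, $T\neq0$ ensure $[\BZ],[\BG_m]\in\la M\ra$ ($[\BZ]$ being a sublattice of the quotient $[X\to0]$ of $M$, and $[\BG_m]$ a quotient of the sub-$1$-motive $[0\to T]$ of $M$), so the right-hand side is meaningful. The first step is to unravel Definition~\ref{defn:Rank} into the statement $\rk{\BG_m}{X}{M}=\dimq{W}$, where $W\seq\qb^*\otimes\BQ$ is the $\BQ$-span of the algebraic numbers $\chi(u(x))$ as $x$ ranges over $X$ and $\chi$ over the character lattice $X^*(T)$ of $T$. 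It then remains to produce a $\BQ$-linear isomorphism $\Ext{\la M\ra}{[\BZ]}{[\BG_m]}\cong W$.

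For that, observe first that because $A=0$ every object of $\la M\ra$ is again a $1$-motive $[X'\xrightarrow{u'}T']$ with $T'$ a torus, since subgroups and quotients of tori are tori. Next, a standard computation (Kummer theory) identifies an extension of $[\BZ]$ by $[\BG_m]$ in the category of $1$-motives up to isogeny with the $1$-motive $[\BZ\xrightarrow{\alpha}\BG_m]$ for a class $\alpha\in\qb^*\otimes\BQ$ it determines, Baer sum going over to multiplication. Because $\la M\ra$ is a full subcategory, this identifies $\Ext{\la M\ra}{[\BZ]}{[\BG_m]}$ with the set $S=\{\alpha\in\qb^*\otimes\BQ : [\BZ\xrightarrow{\alpha}\BG_m]\in\la M\ra\}$, and the proposition becomes the equality $S=W$.

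The inclusion $W\seq S$ is effective. Given $x\in X$ and a nonzero $\chi\in X^*(T)$, the character $\chi$ induces a surjection $M=[X\xrightarrow{u}T]\surj[X\xrightarrow{\chi\circ u}\BG_m]$, and restricting along (the saturation of) $\BZ x$ realises $[\BZ\xrightarrow{\chi(u(x))}\BG_m]$ as a subquotient of $M$; hence $\chi(u(x))\in S$. Closure of $\la M\ra$ under finite direct sums, subquotients and isomorphism then forces $S$ to be a $\BQ$-subspace of $\qb^*\otimes\BQ$: the automorphism $-1$ of $\BZ$ gives $S=-S$; a direct sum followed by the multiplication $\BG_m\times\BG_m\to\BG_m$ and restriction to the diagonal sublattice gives $S+S\seq S$; and the isogeny $[\BZ\xrightarrow{\alpha}\BG_m]\cong[\BZ\xrightarrow{\alpha^N}\BG_m]$ ($N\neq0$) makes $S$ divisible. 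Since $S$ contains each generator of $W$, we get $S\speq W$.

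The reverse inclusion $S\seq W$ is the crux. I would attach to every $1$-motive $N=[X_N\xrightarrow{u_N}T_N]$ with zero abelian part the subspace $W(N)\seq\qb^*\otimes\BQ$ spanned by the $\psi(u_N(y))$, $y\in X_N$, $\psi\in X^*(T_N)$, and prove three monotonicity statements: $W(N_1\oplus N_2)=W(N_1)+W(N_2)$, which is formal; $W(N')\seq W(N)$ when $N'$ is a sub-$1$-motive of $N$, which uses that the restriction $X^*(T_N)\surj X^*(T_{N'})$ of character lattices is surjective (exactness of $T\mapsto X^*(T)$); and $W(N'')\seq W(N)$ when $N''$ is a quotient of $N$, obtained by pulling characters back and lifting lattice elements, the finite-index ambiguities so introduced disappearing after $\otimes\BQ$. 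Granting these, $W(N)\seq W(M^{\oplus n})=W$ for every subquotient $N$ of a power of $M$; applied to $N=[\BZ\xrightarrow{\alpha}\BG_m]$, for which $W(N)=\la\alpha\ra_\BQ$, this gives $\alpha\in W$, whence $S\seq W$ and the proof concludes. Besides this monotonicity --- clearly the main obstacle, since it is what bounds the size of the $\mathrm{Ext}^1$ group from above --- two minor points must still be dispatched: that fullness of $\la M\ra$ really makes $\Ext{\la M\ra}{[\BZ]}{[\BG_m]}$ equal to $S$, and, if Definition~\ref{defn:Rank} is not literally the description of $W$ used above, a routine linear-algebra reconciliation of the two.
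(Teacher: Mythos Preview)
Your argument is correct, but it is organized quite differently from the paper's. The paper isolates a general fact about abelian categories (Proposition~\ref{prop:ExtGeneral}): whenever $B\in\Ext{\CB}{C}{A}$ with $A,C$ lying in semisimple full subcategories $\CA,\CC$ satisfying $\Hom{\CB}{\CA}{\CC}=0$, every class in $\Ext{\la B\ra}{C'}{A'}$ is a finite sum $\sum_i\phi_iB\psi_i$. Applied with $\CB=\omot$, $\CA=$ tori, $\CC=$ lattices and $B=M$, this immediately gives $\Ext{\la M\ra}{[\BZ]}{[\BG_m]}=R_{\BG_m}$, and paragraph~\ref{para:MakeThingsClear} had already identified $R_{\BG_m}$ with your space $W$. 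Your route instead attaches the invariant $W(N)\seq\qb^\times\otimes\BQ$ to each Baker motive $N$ and shows it is monotone under sub\-objects and quotients; this is really the same mechanism in concrete clothing---surjectivity of $X^*(T)\to X^*(T')$ for a subtorus plays the role of the retraction supplied by semisimplicity of $\CA$ in Lemma~\ref{lem:Retraction}, and lifting lattice elements along $X\surj X''$ plays the dual role. Your version is pleasantly self-contained for the Baker case and avoids any categorical abstraction; the paper's version has the advantage that the single Proposition~\ref{prop:ExtGeneral} simultaneously handles the companion statements for $\rk{B}{T}{M}$ and $\rk{B}{X}{M}$ (Propositions~\ref{prop:RankExtOmot} and~\ref{prop:RankExtOmotX}), where a direct monotonicity argument would have to be redone with $B^*(\qb)_\BQ$ in place of $\qb^\times\otimes\BQ$.
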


\begin{prop*}[{\ref{prop:RankExtOmot}}]
	Let $M=(X,A,T,G,u)$ be a $1$-motive with $A \neq 0$ and $T \neq 0$, and let $B$ be a simple component of $A$. Then
	\[ \rk{B}{T}{M} = \dim_{\End{}{B}} {\left(\Ext{\la G \ra}{B}{\BG_m}\right)} = \dim_{\End{}{B}} \left(\Ext{\la M \ra}{[B]}{[\BG_m]} \right).\]
\end{prop*}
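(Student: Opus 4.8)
The plan is to prove the two equalities separately. I will do the second one first, since comparing the $\mathrm{Ext}^1$ computed in $\la M \ra$ with the one computed in $\la G \ra$ is essentially formal, and then turn to the identification of $\dim_{\End{}{B}}\Ext{\la G \ra}{B}{\BG_m}$ with $\rk{B}{T}{M}$, which carries the real content. Throughout one works in the $\BQ$-linear (isogeny) versions of the categories, so that all $\mathrm{Ext}^1$ groups below are $\BQ$-vector spaces and $\End{}{B}$ is a division algebra.

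\emph{Comparison of $\la G \ra$ and $\la M \ra$.} The functor $H \mapsto [H]$ from semiabelian varieties to $1$-motives is fully faithful and exact, and it identifies $\la G \ra$ with the full subcategory of $\la M \ra$ consisting of the $1$-motives of weight $\le -1$: on one hand $[G] = W_{-1}M$ is a subobject of $M$, whence $[G^n] \seq M^n$; on the other hand, by strictness of the weight filtration, a subquotient of $M^n$ of weight $\le -1$ is a subquotient of $W_{-1}(M^n) = [G^n]$, hence of the form $[H]$ with $H \in \la G \ra$. Given this, any short exact sequence $0 \to [\BG_m] \to N \to [B] \to 0$ in $\la M \ra$ has trivial lattice part --- apply the exact lattice-part functor $\mathrm{Gr}^W_0$, which turns the sequence into $0 \to 0 \to \mathrm{Gr}^W_0 N \to 0 \to 0$ --- so $N = [H]$ with $H$ a semiabelian variety, necessarily in $\la G \ra$. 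Since $H \mapsto [H]$ is moreover fully faithful and exact, equivalences and Baer sums of extensions correspond, and we obtain an isomorphism of $\End{}{B}$-modules $\Ext{\la G \ra}{B}{\BG_m} \isom \Ext{\la M \ra}{[B]}{[\BG_m]}$, which is the second equality of the statement.

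\emph{Identification with the rank.} Write $0 \to T \to G \to A \to 0$ and let $e_G \in \Ext{}{A}{T}$ be its extension class. By Barsotti--Weil, $\Ext{}{A}{\BG_m} \otimes \BQ \cong \hat A(k) \otimes \BQ$, so, $T$ being split of some dimension $r$, the class $e_G$ corresponds to a tuple $\xi_1, \dots, \xi_r \in \hat A(k)$, one per coordinate of $T$. Let $\Xi_B \seq \hat B(k) \otimes \BQ$ be the $\End{}{B}$-submodule generated by the elements $\hat\beta(\xi_i)$ for $\beta$ ranging over $\Hom{}{B}{A} \otimes \BQ$ and $1 \le i \le r$, where $\hat\beta\colon \hat A \to \hat B$ is the dual morphism; one verifies from Definition \ref{defn:Rank} that $\rk{B}{T}{M} = \dim_{\End{}{B}}\Xi_B$, possibly after identifying $\Xi_B$ with an isomorphic $\End{}{B}$-module introduced there. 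It then remains to prove $\Ext{\la G \ra}{B}{\BG_m} = \Xi_B$ inside $\Ext{}{B}{\BG_m} \otimes \BQ \cong \hat B(k) \otimes \BQ$. For the inclusion $\speq$: given a morphism $\beta\colon B \to A^n$ and a character $\chi\colon T^n \to \BG_m$, the pushout along $\chi$ of the pullback along $\beta$ of $G^n$ is an extension of $B$ by $\BG_m$ that is a subquotient of $G^n$ and hence lies in $\la G \ra$; by functoriality of Barsotti--Weil its class is $\chi_*\hat\beta^*(e_{G^n})$, an $\End{}{B}$-linear combination of the $\xi_i$, and letting $\beta$ and $\chi$ vary --- using that $\Hom{}{B}{A} \otimes \BQ$ surjects onto $\End{}{B}$ via the coordinate projections of $A$, and that $\Ext{\la G \ra}{B}{\BG_m}$ is a $\BQ$-vector space --- these classes exhaust $\Xi_B$. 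For the inclusion $\seq$: an object of $\la G \ra$ that is an extension of $B$ by $\BG_m$ is a subquotient $H_2/H_1$ of some $G^n$, its torus part a subquotient of $T^n$ and its abelian part a subquotient of $A^n$, and one checks that --- passing through the subobject $H_2$ and then the quotient by $H_1$ --- its extension class remains of the form (operation linear over the relevant homomorphism and endomorphism $\BQ$-algebras) applied to $e_{G^n} = e_G^{\oplus n}$, hence lies in $\Xi_B$. Combining, $\rk{B}{T}{M} = \dim_{\End{}{B}}\Xi_B = \dim_{\End{}{B}}\Ext{\la G \ra}{B}{\BG_m}$, and together with the first step this proves the proposition.

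\emph{Main obstacle.} The hard point is the inclusion $\Ext{\la G \ra}{B}{\BG_m} \seq \Xi_B$: pinning down the Barsotti--Weil class of an arbitrary subquotient of $G^n$ calls for a careful analysis, in the isogeny category, of how the extension class of a semiabelian variety transforms under passage to subobjects and quotients --- reducing, via Poincar\'{e} reducibility for the abelian parts and semisimplicity of tori, to pullbacks and pushouts along honest morphisms --- and then for the bookkeeping needed to reconcile the resulting $\End{}{B}$-module, and the multiplicity conventions built into Definition \ref{defn:Rank}, with the single copy of $\hat B(k) \otimes \BQ$ in which $\Ext{\la G \ra}{B}{\BG_m}$ naturally sits.
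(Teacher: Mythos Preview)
Your proposal is correct in outline and follows the same strategy as the paper, but the paper organizes the key step more cleanly by isolating it as a general abelian-category statement. Specifically, the paper first proves Proposition~\ref{prop:ExtGeneral} (via Lemma~\ref{lem:Retraction}): if $\CA$ and $\CC$ are full semisimple abelian subcategories of $\CB$ with $\Hom{\CB}{A}{C}=0$ for all $A\in\CA$, $C\in\CC$, and $B$ is an extension of an object of $\CC$ by one of $\CA$, then $\Ext{\abgen{B}}{C'}{A'}$ consists exactly of finite sums $\sum_i \phi_i B \psi_i$. Applying this with $\CB=\sav$, $\CA=\CT_\BQ$, $\CC=\CA_\BQ$, and using the identification in \ref{para:MakeThingsClearB} between the resulting space $R_B$ and your $\Xi_B$, yields both inclusions at once. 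Your ``one checks that \ldots\ its extension class remains of the form \ldots'' is precisely the content of Lemma~\ref{lem:Retraction}: semisimplicity supplies a retraction of the inclusion on the torus side and a section of the quotient on the abelian side, and the lemma then expresses the class of the subquotient as a push-pull of $e_{G^n}$. You have correctly located this as the crux and named the right ingredients (Poincar\'e reducibility, semisimplicity of tori), but as written that step is only a sketch; the paper's abstraction makes it both precise and reusable across Propositions~\ref{prop:RankExtOmotBZ}, \ref{prop:RankExtOmot}, and \ref{prop:RankExtOmotX}.

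For the second equality your weight-filtration argument is a reasonable and more explicit variant of the paper's one-line appeal to the isomorphism $\Ext{\sav}{B}{\BG_m} \cong \Ext{\omot}{[B]}{[\BG_m]}$, under which the subspaces computed in $\abgen{G}$ and in $\abgen{M}$ correspond.
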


\begin{prop*}[{\ref{prop:RankExtOmotX}}]
	Let $M=(X,A,T,G,u)$ be a $1$-motive with $A \neq 0$ and $X \neq 0$, and let $B$ be a simple component of $A$. Then
	\[ \rk{B}{X}{M} = \dim_{\End{}{B}} {\left(\Ext{\la M \ra}{[\BZ]}{[B]}\right)}.\]
\end{prop*}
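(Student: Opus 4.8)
The plan is to reduce the statement to Proposition~\ref{prop:RankExtOmot} by Cartier duality, using that the pair of $1$-motives $([\BZ],[B])$ is Cartier-dual to $([B^\vee],[\BG_m])$. Write $M^\vee=(X^\vee,A^\vee,T^\vee,G^\vee,u^\vee)$ for the Cartier dual of $M$. Since $A\neq 0$ we have $A^\vee\neq 0$; since $X\neq 0$ the torus $T^\vee=\mathrm{Hom}(X,\BG_m)$ is non-zero; and $B^\vee$ is a simple component of $A^\vee$. So $M^\vee$ together with $B^\vee$ satisfies the hypotheses of Proposition~\ref{prop:RankExtOmot}, which gives
\[ \rk{B^\vee}{T^\vee}{M^\vee}=\dim_{\End{}{B^\vee}}\left(\Ext{\la M^\vee\ra}{[B^\vee]}{[\BG_m]}\right). \]

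The next step is to transport this Ext space back to $M$. Cartier duality is an anti-equivalence of the $\BQ$-linear category of $1$-motives up to isogeny with itself; it carries $M^{\oplus n}$ to $(M^\vee)^{\oplus n}$ and subquotients to subquotients, hence restricts to an anti-equivalence $\la M\ra\simeq\la M^\vee\ra$ sending $[\BZ]$ to $[\BG_m]$ and $[B]$ to $[B^\vee]$; applied to extension classes it yields a $\BQ$-linear isomorphism
\[ \Ext{\la M\ra}{[\BZ]}{[B]}\ \isom\ \Ext{\la M^\vee\ra}{[B^\vee]}{[\BG_m]} \]
compatible with the natural $\End{}{B}$- and $\End{}{B^\vee}$-module structures via the transpose isomorphism $\End{}{B}\cong\End{}{B^\vee}^{\mathrm{op}}$. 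Since $B$ is simple, $\End{}{B}$ and $\End{}{B^\vee}$ are finite-dimensional division $\BQ$-algebras with $\dimq{\End{}{B}}=\dimq{\End{}{B^\vee}}$, and any module $V$ over such an algebra $D$ satisfies $\dim_D V=\dimq{V}/\dimq{D}$; hence the two Ext spaces have the same dimension over the respective endomorphism algebras, and combining with the previous display,
\[ \dim_{\End{}{B}}\left(\Ext{\la M\ra}{[\BZ]}{[B]}\right)=\dim_{\End{}{B^\vee}}\left(\Ext{\la M^\vee\ra}{[B^\vee]}{[\BG_m]}\right)=\rk{B^\vee}{T^\vee}{M^\vee}. \]

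It then remains to establish the identity $\rk{B}{X}{M}=\rk{B^\vee}{T^\vee}{M^\vee}$ purely from Definition~\ref{defn:Rank}, and this is the step I expect to be the main obstacle. The underlying mechanism is that Cartier duality interchanges the two pieces of data defining $M$: the composite $X\to G\to A$ becomes the classifying map of the semiabelian extension underlying $M^\vee$, and conversely, while a simple component $B$ of $A$ corresponds to the simple component $B^\vee$ of $A^\vee$. The rank $\rk{B}{X}{M}$ is read off from the map $X\to A$ and from $B$, whereas $\rk{B^\vee}{T^\vee}{M^\vee}$ is read off from the classifying map of $G^\vee$ and from $B^\vee$; under this dictionary these are the same datum, so the identity should follow once one checks that Definition~\ref{defn:Rank} is formulated symmetrically with respect to the duality — concretely, that forming $\End{}{B}$-spans of the relevant images commutes with transposing $B$ to $B^\vee$ and with the isotypic decompositions of $A$ and $A^\vee$. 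Keeping track of the endomorphism-module structures through Cartier duality is the one genuinely technical point.

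Alternatively, one can bypass duality and argue as in the proof of Proposition~\ref{prop:RankExtOmotBZ}: identify $\Ext{\la M\ra}{[\BZ]}{[B]}$ with the $\End{}{B}$-submodule generated by the $B$-isotypic components of $u(X)$ inside $\Ext{}{[\BZ]}{[B]}$, the Ext group taken in the full category of $1$-motives up to isogeny — using that a $1$-motive of the form $[\BZ\to B]$ lies in $\la M\ra$ exactly when its class lies in that submodule — and then match this with Definition~\ref{defn:Rank}. Given Proposition~\ref{prop:RankExtOmot}, though, the duality route is the shorter one.
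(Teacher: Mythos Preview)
Your approach is essentially the paper's: reduce to Proposition~\ref{prop:RankExtOmot} applied to the Cartier dual, together with the rank identity $\rk{B}{X}{M}=\rk{B^*}{T^*}{M^*}$. Two remarks. First, what you call ``the main obstacle'' is precisely Proposition~\ref{prop:RankDual}, already stated and proved earlier in the paper, so nothing remains to be checked there. Second, for the Ext identification the paper argues slightly differently: rather than invoking Cartier duality as an abstract anti-equivalence $\la M\ra\simeq\la M^*\ra$, it uses the first equality in Proposition~\ref{prop:RankExtOmot} (with $\la G^*\ra$) and then matches $\Ext{\la G^*\ra}{B^*}{\BG_m}$ with $\Ext{\la M\ra}{[\BZ]}{[B]}$ by observing that the extension $G^*$ of $A^*$ by $T^*$ corresponds to the extension $[X\to A]$ of $[X]$ by $[A]$, and applying Proposition~\ref{prop:ExtGeneral} on both sides to see that the generators $\phi G^*\psi$ and $\psi^*[X\to A]\phi^*$ correspond. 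Your categorical route is cleaner; the paper's is more hands-on but avoids citing exactness properties of Cartier duality.
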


These propositions all rely on the following general result, which we prove in Section \ref{s:abelian} (be mindful that $A$ and $B$ in the following proposition are not related to $A$ and $B$ in the previous ones):

\begin{prop*}[{\ref{prop:ExtGeneral}}]
	Let $\CB$ be an abelian category with full, semisimple abelian subcategories $\CA$ and $\CC$, such that $\Hom{\CB}{A}{C} = 0$ for all objects $A$ in $\CA$ and $C$ in $\CC$. Let $B$, $A$, $A'$, $C$, $C'$ be objects in $\CB$, $\CA$, $\CC$, respectively, such that $B \in \Ext{\CB}{C}{A}$ and that $A'$ and $C'$ are also contained in $\la B \ra$. Then
	\[ \Ext{\la B \ra}{C'}{A'} = \left\{ \sum_{i=1}^n \phi_i B\psi_i \ \middle\vert \ n \in \BN, \phi_i \in \Hom{\CB}{A}{A'}, \psi_i \in \Hom{\CB}{C'}{C} \right\} \]
	as subgroup of $\Ext{\CB}{C'}{A'}$.
\end{prop*}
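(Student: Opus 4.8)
Write $R$ for the subgroup of $\Ext{\CB}{C'}{A'}$ on the right-hand side. Throughout I use that the natural map $\Ext{\la B \ra}{C'}{A'}\to\Ext{\CB}{C'}{A'}$ is injective with image the set of classes whose middle term lies in $\la B \ra$, both of which hold because $\la B \ra$ is closed under subquotients. The inclusion $R \seq \Ext{\la B \ra}{C'}{A'}$ is the easy half. First, $R$ is a subgroup: it contains $0$ (take $n=0$), it is closed under negation (replace each $\phi_i$ by $-\phi_i$) and under Baer sums (concatenate the two sums). Second, for $\phi \in \Hom{\CB}{A}{A'}$ and $\psi \in \Hom{\CB}{C'}{C}$ the class $\phi B \psi = \phi_* \psi^* B$ is represented by an extension whose middle term is obtained from $B$, $A'$, $C'$ by one pullback and one pushout, hence is a subquotient of $B \oplus A' \oplus C' \in \la B \ra$; and Baer sums of classes with middle term in $\la B \ra$ again have middle term in $\la B \ra$.

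The content is the reverse inclusion, and its main ingredient is a canonical two-step filtration on $\la B \ra$: I would show that every object $X$ of $\la B \ra$ has a largest subobject $X_\CA$ lying in $\CA$, that the quotient $X/X_\CA$ then lies in $\CC$, and that $X \mapsto X_\CA$ is functorial on $\la B \ra$. Uniqueness of such an $X_\CA$ and functoriality are formal consequences of the hypothesis that $\Hom{\CB}{A}{C}=0$ for all $A \in \CA$, $C \in \CC$ (which in particular forces $\CA \cap \CC = 0$): any subobject in $\CA$ maps to zero in any quotient in $\CC$. For existence one checks that the property is inherited under finite direct sums, under subobjects and under quotients — using here that $\CA$ and $\CC$, being semisimple abelian subcategories of $\CB$, are closed under subobjects and quotients — and that $B$ has the property by hypothesis, so each $B^n$ does, with $(B^n)_\CA = A^n$ and $B^n/(B^n)_\CA = C^n$; since every object of $\la B \ra$ is a subquotient of some $B^n$, this suffices.

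Now fix a class in $\Ext{\la B \ra}{C'}{A'}$, represented by a short exact sequence $0 \to A' \to E \to C' \to 0$ with $E \in \la B \ra$. Since $A' \in \CA$ and $C' \in \CC$, the filtration lemma forces $E_\CA = A'$ and $E/E_\CA = C'$. Pick $K \seq F \seq B^n$ and a quotient map $\pi \colon F \surj E$ realizing $E \cong F/K$; by functoriality $\pi$ induces surjections $p \colon F_\CA \surj A'$ and $q \colon F/F_\CA \surj C'$, and since $\CC$ is semisimple and $F/F_\CA \in \CC$ we may pick a section $s \colon C' \inj F/F_\CA$ of $q$. A diagram chase — the map $F \times_{F/F_\CA} C' \to E$ sending $(f,c) \mapsto \pi(f)$ is a morphism of short exact sequences lying over $p$ and $\mathrm{id}_{C'}$ — identifies $[E]$ with $p_* s^* [F]$, where $[F]$ is the class of $0 \to F_\CA \to F \to F/F_\CA \to 0$. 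Next, restricting $F \seq B^n$ gives $F_\CA = F \cap A^n$ and an inclusion $j \colon F/F_\CA \inj C^n$; choosing (by semisimplicity of $\CA$) a decomposition $A^n = F_\CA \oplus A''$ with projection $r \colon A^n \surj F_\CA$, a routine pullback–pushout computation yields $[F] = r_* j^* [B^n]$. Finally $[B^n] = \sum_{k=1}^n (\iota_k)_* (\rho_k)^* [B]$ by additivity of $\Ext{\CB}{-}{-}$, where $\iota_k \colon A \inj A^n$ and $\rho_k \colon C^n \surj C$ are the coordinate inclusion and projection. Combining,
\[ [E] \;=\; \sum_{k=1}^n \bigl(p\, r\, \iota_k\bigr)_* \bigl(\rho_k\, j\, s\bigr)^* [B] \;=\; \sum_{k=1}^n \phi_k B \psi_k \in R, \]
where $\phi_k := p\, r\, \iota_k \in \Hom{\CB}{A}{A'}$ and $\psi_k := \rho_k\, j\, s \in \Hom{\CB}{C'}{C}$.

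The main obstacle is the filtration lemma on $\la B \ra$ together with the identifications it yields ($E_\CA = A'$, $F_\CA = F \cap A^n$, and surjectivity of the maps induced by $\pi$ on the associated graded pieces): this is the only point at which all three hypotheses — semisimplicity of $\CA$, semisimplicity of $\CC$, and the vanishing $\Hom{\CB}{A}{C}=0$ for $A \in \CA$, $C \in \CC$ — are genuinely used. Once that is in place, the two identities $[F] = r_* j^* [B^n]$ and $[E] = p_* s^* [F]$, as well as the computation of $[B^n]$, are elementary diagram chases with short exact sequences: somewhat tedious, but presenting no real difficulty.
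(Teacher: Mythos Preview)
Your proof is correct and follows essentially the same route as the paper: both realize an arbitrary extension $E \in \la B\ra$ as a subquotient of some $B^n$, track the induced two-step $(\CA,\CC)$-filtration through that presentation, invoke semisimplicity of $\CA$ and $\CC$ to produce the retraction and section needed to write $[E] = \phi(B^n)\psi$, and then split $B^n$ via the coordinate inclusions and projections. The only differences are cosmetic---the paper isolates the pullback--pushout identity you call a ``routine computation'' as a standalone Lemma~\ref{lem:Retraction} and treats the subobject step before the quotient step, whereas you package the filtration functorially on all of $\la B\ra$ up front and reverse the order; your uniqueness-of-filtration argument is exactly what the paper invokes (citing \cite[Lemma~3.2.7]{NesaPhD}) to identify the given extension structure on $B'$ with the one induced from $B''$.
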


\noindent \textbf{Historical context and motivation}

The study of periods lies at the intersection between algebraic geometry and transcendental number theory: the complex numbers called \textit{periods} arise as integrals of algebraic differential forms on algebraic varieties and form a countable $\qb$-subalgebra of $\BC$. Examples of periods include all the algebraic numbers and their logarithms, $\pi$, all values of the Riemann zeta function at integers, and in fact most numbers that are of interest in mathematics and mathematical physics.

For every algebraic variety $X$ defined over $\qb$, the $\qb$-algebra generated by the periods of $X$ has finite transcendence degree. The first mention of the period conjecture, whose aim is to determine this transcendence degree, dates back to \cite[p.~101]{GrothPerConj} (see \cite{AndLetter} for a detailed historical overview). Because of their well-behaved formal properties, the right categories to study periods are categories of motives, rather than categories of algebraic varieties. The simplest case is well-understood: for Artin motives, one obtains the algebraic numbers as periods. The next step is to study $1$-motives, defined in \cite[§10]{Del3}, but already in this case the period conjecture in its original formulation is currently intractable. However, there is also a variant of this conjecture, which we call the \textit{linear} period conjecture, which was introduced by Kontsevich in \cite[§4]{KZ} and turns out to be closely related to the original, or \textit{algebraic}, period conjecture due to Grothendieck (see the discussion in \cite{AndLetter} and \cite[§5]{HubGalois}). The conjecture of Kontsevich can be elegantly phrased as saying that the only linear relations between periods are the ones implied by the fact that integration is bilinear and functorial.

In \cite{HW}, Huber and W\"ustholz succeed in proving the linear period conjecture for $1$-motives and derive an explicit formula for the dimension of the $\qb$-linear space generated by the periods of a $1$-motive. These results are very valuable in their own right and have several important applications in classical transcendence theory, but new techniques would be needed to generalize them to the algebraic period conjecture or to other categories of motives. In particular, the explicit formula of \cite[Theorem 1.4]{HW} for the dimension of the $\qb$-linear space generated by the periods of a $1$-motive is quite complicated and relies heavily on the explicit definition of $1$-motives.

The goal of this article is to reformulate some terms in the formula of \cite[Theorem 1.4]{HW} (reproduced here as Theorem \ref{thm:Formula}), called ``ranks'' and defined \textit{ad hoc} for $1$-motives, in more abstract and categorical terms. We do it in Propositions \ref{prop:RankExtOmotBZ}, \ref{prop:RankExtOmot} and \ref{prop:RankExtOmotX}  (see above), by showing that these ranks are the dimensions of some Ext$^1$ vector spaces (note that, even if the objects appearing as arguments of the Ext$^1$ bifunctor come from the explicit definition of $1$-motives, they can be expressed in general and categorical terms using the weight filtration). The main ingredient is Proposition \ref{prop:ExtGeneral} (see above), which holds in any abelian category. The results of this article are part of my PhD thesis, more precisely \cite[§3]{NesaPhD}.

We have not achieved completely the goal of understanding the whole dimension formula from a more abstract and conceptual point of view, since there is one summand in the formula which cannot be directly reformulated using our results; however, Huber and Kalck have recently addressed this problem in \cite{HK}.

Besides making the dimension formula for $1$-motives much clearer, this approach consisting of abstraction and categorization might, in the future, find application to more complicated categories of motives.\\

\noindent \textbf{Acknowledgments}

The results of this article were obtained while I was a PhD student at the University of Freiburg. The funding was provided by the DFG-Graduiertenkolleg 1821 ``Cohomological Methods in Geometry''.

Thanks to my PhD supervisor, Annette Huber, for providing detailed feedback on this article even after the completion of my PhD.

Thanks to the anonymous reviewer for their useful comments.

\newpage
\section{Results for Ext$^1$ groups in abelian categories}\phantom{}
\label{s:abelian}

The goal of this section is to formulate and prove Proposition \ref{prop:ExtGeneral}, which holds in any abelian category and constitutes the main ingredient for the proofs of Propositions \ref{prop:RankExtOmotBZ}, \ref{prop:RankExtOmot} and \ref{prop:RankExtOmotX}. In this section we use only basic homological algebra.

\begin{conv} \label{conv:Cats}
	In this article, categories are always essentially small. We are interested in abelian categories, such as the category $\omot$ of $1$-motives (see Definition \ref{defn:Omot}). If $\CB$ is an abelian category and $B$ is an object of $\CB$, we denote by $\abgen{B}$ the smallest full abelian subcategory of $\CB$ closed under subquotients containing $B$; it consists of the objects of $\CB$ that are subquotients of finite direct sums of copies of $B$.
\end{conv}

\begin{rem}
	The categories in the later sections are actually also $\BQ$-linear: in $\BQ$-linear abelian categories, Hom groups and Ext groups are even $\BQ$-vector spaces. Abelian subcategories of $\BQ$-linear abelian categories are automatically $\BQ$-linear.
\end{rem}

\begin{para} \label{para:Ext}
	Let $\CB$ be a abelian category and let $A$, $A'$, $C$, $C'$ be objects of $\CB$. We denote by $\Ext{\CB}{C}{A}$ the abelian group of \textit{isomorphism classes of (Yoneda) extensions} of $C$ by $A$, where addition is given by the \textit{Baer sum}. Given $B \in \Ext{\CB}{C}{A}$, $\phi \in \Hom{\CB}{A}{A'}$ and $\psi\in \Hom{\CB}{C'}{C}$, we denote by
	\[\phi B:= A' \sqcup_A B \in \Ext{\CB}{C}{A'} \hand B\psi:= B \times_C C' \in \Ext{\CB}{C'}{A}\]
	the push-out of $B$ along $\phi:A \to A'$ and the pull-back of $B$ along $\psi:C' \to C$, respectively.
\end{para}

\begin{lem} \label{lem:Retraction}
	Consider the following commutative diagram with short exact rows in an abelian category $\CB$:
	\[\begin{tikzcd}
		A' \arrow[d, "\iota"'] \arrow[r] & B' \arrow[d] \arrow[r] & C' \arrow[d, "\psi"] \\
		A \arrow[r]                  & B \arrow[r]                  & C.                
	\end{tikzcd}\]
	If $\iota$ has a retraction $\kappa$, then $B' = \kappa B \psi$ as elements of $\Ext{\CB}{C'}{A'}$. Dually, if $\psi$ has a section $\phi$, then $B = \iota B' \phi$ as elements of $\Ext{\CB}{C}{A}$.
\end{lem}

\begin{proof}
	A standard argument implies that $B\psi=\iota B'$ as elements of $\Ext{\CB}{C'}{A}$. Then it suffices to apply to both sides of the latter equality either the push-out along $\kappa$ or the pull-back along $\phi$ to obtain the desired equalities of isomorphism classes of extensions.
\end{proof}

Now we state and prove the main result of this section. Pay attention to the lower index (the category) in the $\Ext{-}{-}{-}$ groups, as it is crucial for the argument.

\begin{prop} \label{prop:ExtGeneral}
	Let $\CB$ be an abelian category with full, semisimple abelian subcategories $\CA$ and $\CC$, such that $\Hom{\CB}{A}{C} = 0$ for all objects $A$ in $\CA$ and $C$ in $\CC$. Let $B$, $A$, $A'$, $C$, $C'$ be objects in $\CB$, $\CA$, $\CC$, respectively, such that $B \in \Ext{\CB}{C}{A}$ and that $A'$ and $C'$ are also contained in $\la B \ra$. Then
	\[ \Ext{\la B \ra}{C'}{A'} = \left\{ \sum_{i=1}^n \phi_i B\psi_i \ \middle\vert \ n \in \BN, \phi_i \in \Hom{\CB}{A}{A'}, \psi_i \in \Hom{\CB}{C'}{C} \right\} \]
	as subgroup of $\Ext{\CB}{C'}{A'}$.
\end{prop}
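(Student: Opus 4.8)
The plan is to establish the two inclusions separately. The inclusion ``$\supseteq$'' is the routine one: by the explicit description of pull-backs and push-outs in \ref{para:Ext}, each summand $\phi_i B \psi_i$ has a middle term that is a subquotient of $A' \oplus B \oplus C'$, hence an object of $\la B \ra$; moreover $\Ext{\la B \ra}{C'}{A'}$ is a subgroup of $\Ext{\CB}{C'}{A'}$, because the inclusion $\la B \ra \hookrightarrow \CB$ is exact (so it respects Baer sums) and injective on $\mathrm{Ext}^1$ (a $\CB$-splitting of a sequence in $\la B \ra$ is automatically a morphism of $\la B \ra$, the latter being full). So everything reduces to proving ``$\subseteq$''.

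For ``$\subseteq$'', fix an extension $0 \to A' \overset{a}{\inj} E \overset{c}{\surj} C' \to 0$ with $E \in \la B \ra$, and pick $m$ and subobjects $E_2 \seq E_1 \seq B^{\oplus m}$ together with a surjection $q \colon E_1 \surj E$ of kernel $E_2$. Let $0 \to A^{\oplus m} \to B^{\oplus m} \overset{P}{\to} C^{\oplus m} \to 0$ be the $m$-fold direct sum of the defining sequence of $B$. Intersecting with $E_1$ yields a short exact sequence $0 \to E_1 \cap A^{\oplus m} \to E_1 \overset{P}{\to} P(E_1) \to 0$, in which $E_1 \cap A^{\oplus m}$ is a direct summand of $A^{\oplus m}$ (semisimplicity of $\CA$) and $P(E_1)$ a direct summand of $C^{\oplus m}$ (semisimplicity of $\CC$). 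This sequence maps, via monomorphisms, to $0 \to A^{\oplus m} \to B^{\oplus m} \to C^{\oplus m} \to 0$, the left-hand monomorphism $E_1 \cap A^{\oplus m} \inj A^{\oplus m}$ admitting a retraction $\rho$; so Lemma \ref{lem:Retraction} gives $E_1 = \rho \, B^{\oplus m} j$ in $\Ext{\CB}{P(E_1)}{E_1 \cap A^{\oplus m}}$, where $j \colon P(E_1) \inj C^{\oplus m}$ is the inclusion. Since $\Ext{\CB}{-}{-}$ turns finite direct sums into finite direct sums, $B^{\oplus m} = \sum_{i=1}^m \iota_i B \pi_i$ with $\iota_i \colon A \inj A^{\oplus m}$ and $\pi_i \colon C^{\oplus m} \surj C$ the canonical maps, hence $E_1 = \sum_{i=1}^m (\rho \iota_i) B (\pi_i j)$.

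It remains to push this expression down along $q$. The composite $E_1 \cap A^{\oplus m} \overset{q}{\to} E \overset{c}{\to} C'$ is a morphism from an object of $\CA$ to an object of $\CC$, hence zero, so $q$ restricts to $q_\CA \colon E_1 \cap A^{\oplus m} \to A'$. The crucial point is that $q_\CA$ is surjective: putting $A'' := q(E_1 \cap A^{\oplus m}) \seq a(A')$, the map $q$ induces a surjection $P(E_1) \cong E_1/(E_1 \cap A^{\oplus m}) \surj E/A''$, so $E/A''$ is a quotient of a semisimple object, hence semisimple, hence isomorphic to a direct summand of $P(E_1)$ and thus an object of $\CC$; therefore $a(A') \inj E \surj E/A''$ is again a morphism from $\CA$ to $\CC$, hence zero, which forces $a(A') \seq A''$ and so $A'' = a(A')$. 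Likewise $q$ induces a surjection $q_\CC \colon P(E_1) \surj C'$, which splits because $P(E_1)$ is semisimple; choose a section $\psi \colon C' \to P(E_1)$. We now have a commutative diagram with exact rows, $0 \to E_1 \cap A^{\oplus m} \to E_1 \to P(E_1) \to 0$ on top and $0 \to A' \to E \to C' \to 0$ below, all three vertical maps ($q_\CA$, $q$, $q_\CC$) surjective and the right-hand one admitting the section $\psi$. The dual of Lemma \ref{lem:Retraction} then gives $E = q_\CA \, E_1 \, \psi$ in $\Ext{\CB}{C'}{A'}$, and substituting the formula for $E_1$ yields $E = \sum_{i=1}^m (q_\CA \rho \iota_i) \, B \, (\pi_i j \psi)$, which is of the required form, with $\phi_i := q_\CA \rho \iota_i \in \Hom{\CB}{A}{A'}$ and $\psi_i := \pi_i j \psi \in \Hom{\CB}{C'}{C}$.

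The main obstacle, beyond keeping the diagram chase straight, is the surjectivity of $q_\CA$: one might expect it to fail, since the $q$-preimage of $A' \subseteq E$ in $E_1$ can properly contain $E_1 \cap A^{\oplus m}$, and it is precisely the standing hypothesis $\Hom{\CB}{A}{C} = 0$ for $A \in \CA$, $C \in \CC$, used twice, that rescues the argument. A secondary point is to formulate correctly the ``dual'' of Lemma \ref{lem:Retraction} (quotients with sections in place of subobjects with retractions, push-outs and pull-backs interchanged), but this is exactly the dual statement the lemma already asserts; and one uses throughout that objects of $\CA$ and $\CC$ are semisimple as objects of $\CB$, so that their subobjects split off.
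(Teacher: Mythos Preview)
Your proof is correct and follows essentially the same route as the paper's: both establish the hard inclusion by writing the middle term as a subquotient of $B^{\oplus m}$, applying Lemma~\ref{lem:Retraction} to the subobject step (using semisimplicity of $\CA$) and its dual to the quotient step (using semisimplicity of $\CC$), and then decomposing $B^{\oplus m}$ as a Baer sum over the $m$ coordinates. The one substantive difference is at the quotient step: the paper invokes an external uniqueness lemma (\cite[Lemma 3.2.7]{NesaPhD}) to the effect that, under the Hom-vanishing hypothesis, an object of $\CB$ can sit in at most one short exact sequence with ends in $\CA$ and $\CC$, whereas you prove directly that $q_\CA$ surjects onto $A'$ by showing $E/A'' \in \CC$ and then using Hom-vanishing again. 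Your argument is precisely what that uniqueness lemma unpacks to in this situation, so the two are equivalent, with yours being the more self-contained.
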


\begin{proof}
	The subcategory $\la B \ra$ of $\CB$ is closed under pull-backs, push-outs, and sums of extensions, so every extension of $C'$ by $A'$ of the form $\sum_{i=1}^n \phi_iB\psi_i$ is indeed contained in $\Ext{\la B \ra}{C'}{A'}$. To prove the proposition we need to show that the converse inclusion also holds. To this aim, let $B'$ be an object of $\abgen{B}$ that extends $C'$ by $A'$. By definition of the category $\abgen{B}$, there is some object $B''$ in $\CB$ and some $n \in \BN$ such that
	\[B' \twoheadleftarrow B'' \inj B^n\]
	is a subquotient of a power of $B$. Setting
	\[A'':= A^n \times_{B^n} B'' = \ker{A^n \to B^n/B''} \in \CA, \ \mathrm{and}\]
	\[C'':= B''/A'' = \img{B'' \to C^n} \in \CC,\]
	the subobject $B'' \inj B^n$ determines a commutative diagram with short exact rows of the form
	\[\begin{tikzcd}
		A'' \arrow[r]  \arrow[d, hook, "f"]     & B'' \arrow[r]   \arrow[d, hook]       & C'' \arrow[d, hook, "g"] \\
		A^n \arrow[r]       & B^n \arrow[r]          & C^n.               
	\end{tikzcd}\]
	Since $\CA$ is semisimple and $f$ is injective, $f$ has a retraction $r$. It thus follows from Lemma \ref{lem:Retraction} that $B'' = r (B^n) g$ as elements of $\Ext{\la B \ra}{C''}{A''}$. Now we want to apply a dual argument to the quotient map $B'' \surj B'$, but some care is needed: a priori, we only know that there is \textit{some} commutative diagram with short exact rows of the form
	\[\begin{tikzcd}
		\ti{A} \arrow[r] &B' \arrow[r] &\ti{C}\\
		A'' \arrow[r]  \arrow[u, two heads] & B'' \arrow[r] \arrow[u, two heads] & C'', \arrow[u, two heads]
	\end{tikzcd}\]
	with some objects $\ti{A} \in \CA$ and $\ti{C} \in \CC$, but we do not know anything about the upper short exact sequence. However, since we have assumed that there is no non-trivial morphism from $\CA$ to $\CC$, a standard argument implies that any given object of $\CB$ can define at most one extension of an object of $\CC$ by an object of $\CA$, up to (unique) isomorphisms; therefore, we can assume without loss of generality that the upper short exact sequence is the same that defines $B' \in \Ext{\CB}{C'}{A'}$, with $\ti{A} = A'$ and $\ti{C}=C'$. Now, from Lemma \ref{lem:Retraction} we can deduce, as above (this time, using that $\CC$ is semisimple), that $B' = \phi (B^n)\psi$ for some maps $\phi \in \Hom{\CB}{A^n}{A'}$, $\psi \in \Hom{\CB}{C'}{C^n}$. To conclude, for $i \in \{1,...,n\}$, denote by $\iota_i : A \to A^n$ and $\rho_i: C^n \to C$ the standard inclusions and projections. Set $\phi_i := \phi \circ \iota_i \in \Hom{\CB}{A}{A'}$ and $\psi_i := \rho_i \circ \psi \in \Hom{\CB}{C'}{C}$. Then, using that extensions are compatible with direct sums, we compute that
	\[ B'= \phi (B^n)\psi = \sum_{i =1}^n \phi_i B \psi_i\]
	as an element of $\Ext{\la B \ra}{C}{A}$, as was to be shown.
\end{proof}

\section{Recapitulation on $1$-motives and their linear spaces of periods}
\label{s:recap}

In this section we recapitulate the main definitions and notation that we need to explain and prove our results in Section \ref{s:results}.

\begin{conv} \label{conv:CatsSAV}
	The base field throughout this article is $\qb$. Recall that a \textit{semiabelian variety} $G$ is a commutative, connected group variety which fits into a short exact sequence of group varieties
	\[ 0 \to T \to G \to A \to 0,\]
	where $T$ is a torus and $A$ is an abelian variety. We denote by $\cgv$ the category of commutative group varieties over $\qb$ and by $\CT$, $\CA$ and ${\CS \CA}$ the full subcategories of $\cgv$ of tori, abelian varieties and semiabelian varieties. Moreover, we denote by $\CT_\BQ$, $\CA_\BQ$ and ${\CS \CA}_\BQ$ the categories of tori, abelian varieties and semiabelian varieties \textit{up to isogeny}, i.e., with morphisms tensored with $\BQ$. Recall that, for all semiabelian varieties $G$ and $G'$, we also have
	\[\Ext{\sav}{G}{G'} \cong \Ext{\cgv}{G}{G'} \otimes_\BZ \BQ.\]
	The category $\sav$ is $\BQ$-linear and abelian, and its full abelian subcategories $\CT_\BQ$ and $\CA_\BQ$ are even semisimple. For more on semiabelian varieties see \cite[§4]{HW}, \cite{BrionIsogeny}, \cite[§1.4]{NesaPhD}.
\end{conv}

\begin{defn}[{\cite[D\'efinition (10.1.2)]{Del3}}] \label{defn:Omot}
	A \textit{$1$-motive} $M$ over $\qb$ consists of:
	\begin{itemize}
		\item a finite free $\BZ$-module $X$, an abelian variety $A$ and an algebraic torus $T$;
		\item a semiabelian variety $G$ which is an extension of $A$ by $T$;
		\item a group homomorphism $u:X \to G(\qb)$.
	\end{itemize}
	The $\BQ$-linear abelian category $\omot$ of $1$-motives has as objects $1$-motives and as arrows morphisms of complexes, tensored with $\BQ$.
\end{defn}

\begin{conv}
	Other sources (see for instance \cite[§8]{HW}) distinguish the categories of $1$-motives and iso-$1$-motives, i.e., $1$-motives up to isogeny. We will only consider the second category, and call its objects simply \textit{$1$-motives}.
\end{conv}

\begin{conv}
	We use the notation $M=[X \to G]$, or sometimes the more explicit $M=(X,A,T,G,u)$. If $X$ or $G$ is trivial, we denote $M$ as $[G]$ or $[X]$, respectively.
\end{conv}

\begin{para} \label{para:Periods}
	Since we are going to talk of dimensions of spaces of periods of $1$-motives, we should at least sketch what periods are. The idea one should have in mind is that a period is a complex number obtained integrating some algebraic differential form on some algebraic cycle, even though with motives it is not always straightforward to see on what space the integration is taking place. For the study of periods of $1$-motives, the relevant \textit{realization functors} are the singular realization and the de Rham realization, which are linear, faithful, exact functors $V_\mathrm{sing}(\cdot): \omot \to \vects{\BQ}$ and $V_\mathrm{dR}(\cdot): \omot \to \vects{\qb}$ (the targets are the categories of finite dimensional vector spaces over $\BQ$ and $\qb$, respectively). These two realizations become isomorphic after base change to $\BC$, so for every $1$-motive $M$ there is a $(\BQ,\qb)$-bilinear and functorial \textit{integration pairing}
	\[ \int: V_\mathrm{sing}(M) \times V_\mathrm{dR}(M)^* \to \BC.\]
	The \textit{periods of $M$} are the elements of the finite dimensional $\qb$-subspace $\CP(M) \seq \BC$ generated by the image of this pairing. By functoriality, the periods of all the motives in the category $\abgen{M}$ generated by $M$ (see Convention \ref{conv:Cats}) are contained in $\CP(M)$. The $\qb$-dimension of $\CP(M)$ is denoted $\delta(M)$.
	
	We do not need to go into more details for the purpose of this article, so we refer the interested reader to the original reference \cite[§10.1]{Del3} and to the more detailed explanations in \cite[§8.1, §9.1]{HW}. We limit ourselves to presenting a few examples of $1$-motives and of their periods.
\end{para}
	
\begin{ex} \label{ex:Simple}
	The simplest non-trivial $1$-motives are those with exactly one non-trivial component, i.e., those of the form $[X]$; $[T]$; or $[A]$. Their periods are generated over $\qb$, respectively, by $1$; $2\pi i$; or the (classical) periods of $A$. See \cite[Example 9.5; §10.1; §10.4]{HW} for more details.
\end{ex}

\begin{ex}\label{ex:Baker}
	The next easiest examples are \textit{Baker motives}, i.e., $1$-motives for which only $A$ is $0$, which are thus of the form $M=[X \to T]$. Such a $1$-motive has $[T]$ as a subobject and $[X]$ as a quotient. We can rewrite $M$ without loss of generality as
	\[M=[\BZ^r \xrightarrow{(\alpha_{jk})} \BG_m^s],\]
	where $\alpha_{jk}$ ($j \in \{1,\dots,s\}$, $k \in \{1,\dots,r\}$) denotes the algebraic number which is the image of $(0,\dots,0,1,0,\dots,0) \in \BZ^r$ (only the $k$-th entry is $1$) in the $j$-th component of $\BG_m^s(\qb) = (\qb^\times)^s$. We can thus think of the map $X \to T$ as the matrix $(\alpha_{jk})$ in $\qb^{s \times r}$. Then, the periods of $M$
	are  generated by $1,2\pi i,\{\log(\alpha_{jk})\}_{jk}$ over $\qb$, where ``$\log$'' can be any branch of the complex logarithm (since they differ by integer multiples of $2\pi i$). For the computations, see \cite[§10.2]{HW}.
\end{ex}

We define ranks of $1$-motives in Definition \ref{defn:Rank}, in the next section. But now that we have sketched the setting, to provide more context we formulate \cite[Theorem 1.4]{HW}, where ranks of $1$-motives originally appear. For a $1$-motive $M$, recall that we denote by $\delta(M)$ the dimension of its linear space of periods $\CP(M)$.

\begin{thm}[{\cite[Theorem 1.4]{HW}}]
	\label{thm:Formula}
	Let $M=(X,A,T,G,u)$ be a $1$-motive. Let $A \cong B_1^{n_1} \times \cdots \times B_m^{n_m}$ be the decomposition of $A$ into simple factors in the category of abelian varieties up to isogeny, and for each $B_i$ denote by $g(B_i)$ the dimension of $B_i$ and by $e(B_i)$ the dimension of $\End{}{B_i}_\BQ$ over $\BQ$. The linear space $\CP(M)$ of periods of $M$ has dimension
	\[
	\delta(M) = \delta_{\textnormal{Ta}}(M) + \delta_2(M) + \delta_\textnormal{alg}(M) + \delta_3(M) + \delta_\textnormal{inc2}(M) + \delta_\textnormal{inc3}(M),
	\]
	where
	\begin{itemize}
		\item $\delta_{\textnormal{Ta}}(M) = \delta([T])$ is $0$ if $T$ vanishes, and $1$ otherwise;
		\item $\delta_\textnormal{alg}(M) = \delta([X])$ is $0$ if $X$ vanishes, and $1$ otherwise;
		\item $\delta_2(M) = \delta([A]) = \sum_{B_i} \tfrac{4g(B_i)^2}{e(B_i)}$;
		\item $\delta_3(M) = \sum_{B_i}2g(B_i)\rk{B_i}{X}{M}$ (see Definition \ref{defn:Rank});
		\item $\delta_\textnormal{inc2}(M) = \sum_{B_i}2g(B_i)\rk{B_i}{T}{M}$ (see Definition \ref{defn:Rank});
		\item $\delta_\textnormal{inc3}(M)$ is as described in \cite[§17]{HW}.
	\end{itemize}
	All sums are over all simple factors $B_i$ of $A$, up to isogeny, without multiplicities.
\end{thm}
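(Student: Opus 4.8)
The plan is to prove Theorem~\ref{thm:Formula} in two stages: first the \emph{linear period conjecture for $1$-motives}, which is the deep theorem of \cite{HW} and rests on transcendence theory, and then a purely formal dimension count along the weight filtration of $M$. Concretely, using the realizations $V_{\mathrm{sing}}$, $V_{\mathrm{dR}}$ of~\ref{para:Periods} and the comparison isomorphism over $\BC$, one attaches to every $1$-motive $N$ a ``formal period space'' $\widetilde{\CP}(N)$: a finite-dimensional $\qb$-vector space built from the linear algebra of the two realizations together with the relations forced by the morphisms in $\abgen{N}$, equipped with a canonical surjection $\widetilde{\CP}(N)\surj\CP(N)$. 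The division of labour is: (1) show this surjection is an isomorphism (the linear period conjecture), so that $\delta(N)=\dimqb{\widetilde{\CP}(N)}$; and (2) compute $\dimqb{\widetilde{\CP}(M)}$ formally and check it equals the right-hand side of the stated formula.

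\textbf{Step (1) is where the real difficulty lies.} I would carry it out as in \cite{HW}: exploiting the weight filtration and the description of a $1$-motive, up to isogeny, as a point $u$ on a semiabelian variety (equivalently, a biextension datum), one reduces the injectivity of $\widetilde{\CP}(M)\surj\CP(M)$ to the $\qb$-linear independence of the relevant abelian and toric logarithms of $\qb$-points apart from the ``evident'' relations, and this is exactly what W\"ustholz's analytic subgroup theorem supplies (in the toric case already Baker's theorem, cf.\ Example~\ref{ex:Baker}). There is no way around invoking a transcendence result of this strength, and essentially all the depth of Theorem~\ref{thm:Formula} is concentrated here; once it is granted, the remainder is bookkeeping.

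For Step (2), consider the weight filtration $W_{-2}M=[T]\subseteq W_{-1}M=[G]\subseteq M$, whose graded pieces are $[X]$ (weight $0$), $[A]$ (weight $-1$) and $[T]$ (weight $-2$), and recall that all $\mathrm{Hom}$-groups between distinct pure types vanish, e.g.\ $\Hom{}{[X]}{[A]}=\Hom{}{[A]}{[T]}=\Hom{}{[X]}{[T]}=0$ --- precisely the rigidity hypothesis of Proposition~\ref{prop:ExtGeneral}. This vanishing, together with additivity of $\mathrm{Ext}^1$, makes $\widetilde{\CP}(M)$ split as a direct sum of six pieces. The three pure pieces contribute $\delta([X])=\delta_{\mathrm{alg}}(M)$, $\delta([A])=\delta_2(M)$ and $\delta([T])=\delta_{\mathrm{Ta}}(M)$: here $\delta([X])$ and $\delta([T])$ are $0$ or $1$ by inspection (the periods of $[\BZ]$ and of $[\BG_m]$ being spanned by $1$ and by $2\pi i$), while $\delta([A])=\sum_{B_i}4g(B_i)^2/e(B_i)$ is a linear-algebra computation carried out simple factor by simple factor (the $\End{}{B_i}$-linear period matrix of $B_i$ fills up a $\qb$-space of that dimension). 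The remaining three pieces correspond to the one-step subquotients of the weight filtration. The extension $M/W_{-2}M$ of $[X]$ by $[A]$ produces the ``incomplete abelian integrals'' attached to $u\colon X\to A$; each $\End{}{B_i}$-independent point on $B_i$ contributes $\dimqb{V_{\mathrm{dR}}([B_i])}=2g(B_i)$ new period-matrix entries, and the number of such points is $\rk{B_i}{X}{M}$, identified in Proposition~\ref{prop:RankExtOmotX} (via Proposition~\ref{prop:ExtGeneral}) with $\dim_{\End{}{B_i}}\Ext{\la M\ra}{[\BZ]}{[B_i]}$, so this piece contributes $\sum_{B_i}2g(B_i)\rk{B_i}{X}{M}=\delta_3(M)$. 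Symmetrically, the extension $W_{-1}M=[G]$ of $[A]$ by $[T]$ contributes $\sum_{B_i}2g(B_i)\rk{B_i}{T}{M}=\delta_{\mathrm{inc2}}(M)$, with $\rk{B_i}{T}{M}=\dim_{\End{}{B_i}}\Ext{\la G\ra}{B_i}{\BG_m}$ by Proposition~\ref{prop:RankExtOmot}. The last piece relates $[X]$ and $[T]$ through the biextension and is new only modulo the composite of the previous two; it contributes $\delta_{\mathrm{inc3}}(M)$, which when $A=0$ reduces to $\rk{\BG_m}{X}{M}=\dimq{\Ext{\la M\ra}{[\BZ]}{[\BG_m]}}$ as in Proposition~\ref{prop:RankExtOmotBZ}, but in general has to be evaluated through the biextension analysis of \cite[§17]{HW} --- this is the one summand that is not directly a single $\mathrm{Ext}^1$ dimension. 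Adding the six contributions gives $\delta(M)=\delta_{\mathrm{Ta}}(M)+\delta_2(M)+\delta_{\mathrm{alg}}(M)+\delta_3(M)+\delta_{\mathrm{inc2}}(M)+\delta_{\mathrm{inc3}}(M)$.

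What remains to check is routine, if laborious: that $\widetilde{\CP}(M)$ really decomposes into exactly these six summands (from the $\mathrm{Hom}$-vanishing and additivity of $\mathrm{Ext}^1$, the circle of ideas around Propositions~\ref{prop:ExtGeneral}, \ref{prop:RankExtOmotBZ}, \ref{prop:RankExtOmot} and \ref{prop:RankExtOmotX}); that the off-diagonal period-matrix entries of an extension are indeed the incomplete integrals claimed; and the explicit evaluations of $\delta([A])$ and of $\delta_{\mathrm{inc3}}(M)$. None of this is conceptually hard once Step~(1) is available; the analytic subgroup theorem is the only genuinely deep ingredient.
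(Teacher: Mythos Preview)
The paper does not prove Theorem~\ref{thm:Formula} at all: it is quoted verbatim from \cite[Theorem~1.4]{HW} in the recapitulation section purely as background, with no proof supplied. So there is nothing in the present paper to compare your proposal against; the comparison you were asked to make is vacuous here.

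That said, two remarks on your sketch. First, as a high-level outline of how \cite{HW} establishes the formula it is broadly accurate --- the analytic subgroup theorem for Step~(1), then a decomposition along the weight filtration for Step~(2) --- but it remains only an outline: the ``routine, if laborious'' verifications you defer (the precise definition of the formal period space, the decomposition into six summands, the explicit computation of $\delta_{\mathrm{inc3}}$) are in fact the bulk of \cite{HW} and are far from routine. Second, your use of Propositions~\ref{prop:RankExtOmotBZ}, \ref{prop:RankExtOmot} and \ref{prop:RankExtOmotX} inside the argument is anachronistic: those propositions are the \emph{contribution} of the present paper, proved \emph{after} Theorem~\ref{thm:Formula} is already in hand, and their purpose is to reinterpret the ranks appearing in the formula, not to derive the formula. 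The original proof in \cite{HW} works directly with the concrete Definition~\ref{defn:Rank} of the ranks and never passes through $\mathrm{Ext}^1$ in $\langle M\rangle$.
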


\begin{para}
	The goal of this article is to shed light on $\delta_\textnormal{inc2}$ and $\delta_3$, or more precisely, on the ranks that appear in their formulas. We do not say much about $\delta_\textnormal{inc3}(M)$, which in general is more complicated (except if $X$, $T$ or $A$ are trivial; see also Corollary \ref{cor:Baker}) and is computed in detail with \textit{ad hoc} techniques in \cite[§17]{HW}. Subsequent work by Huber and Kalck (see \cite{HK}) makes also the description of $\delta_\textnormal{inc3}(M)$ more categorical.
\end{para}

To conclude this recapitulation, let us now recall a few facts and definitions about extensions and dualities.

\begin{para} \label{para:ExtChar}
	Removing the zero section from a line bundle in $\mathrm{Pic}^0(A) \cong A^*(\qb)$ (where $A^*$ is the abelian variety dual to $A$, see \cite[§8]{Mumford}) produces a $\BG_m$-bundle on $A$ which is a semiabelian variety; this induces an isomorphism
	\[A^*(\qb) \isom \Ext{\cgv}{A}{\BG_m}.\]
	More generally, given a torus $T$ and an abelian variety $A$, we have an isomorphism of $\BZ$-modules
	\[ \Ext{\cgv}{A}{T} \isom \Hom{\ab}{\chi(T)}{A^*(\qb)},\]
	where on the right-hand side we have morphisms of abelian groups and $\chi(T)$ denotes the character group of $T$. The isomorphism sends an extension $G$ to the map that assigns to a character $\phi:T \to \BG_m$ the extension
	\[ \phi G \in \Ext{\cgv}{A}{\BG_m} \cong A^*(\qb)\]
	(see \cite[Corollary 4.11]{HW} and the notation of \ref{para:Ext}).
\end{para}

\begin{defn}[{\cite[(10.2.11)]{Del3}}] \label{defn:CartierDual}
	Let $M=(X,A,T,G,u)$ be a $1$-motive. The \textit{Cartier dual} $M^*=(X^*,A^*,T^*,G^*,u^*)$ of $M$ is defined as follows:
	\begin{itemize}
		\item $X^*:= \Hom{\cgv}{T}{\BG_m}$ is the group of characters of $T$;
		\item $A^* \cong \Ext{\cgv}{A}{\BG_m}$ is the dual abelian variety of $A$;
		\item $T^*$ is the torus whose group of characters is $X$;
		\item $G^*:=\Ext{\omot}{[X \to A]}{\BG_m}$ (see \cite[(10.2.11) b)-c)]{Del3});
		\item $u^*:X^* \to G^*(\qb)$ maps $x:T \to \BG_m$ to the extension of $[X \to A]$ by $\BG_m$ obtained by push-out of $M$ along $x$, where $M$ is seen as an element of $\Ext{\omot}{[X \to A]}{T}$ (similarly to \ref{para:ExtChar}).
	\end{itemize}
\end{defn}

\begin{conv}
	In this article, the symbol $^*$ is only meant to recognize the constituents of $M^*$, and not to denote duality on all of them individually (except for $A^*$).
\end{conv}

\begin{rem} \label{rem:GeneralExtensionG}
	The map $X^* \to G^*(\qb) \to A^*(\qb) = \Ext{\cgv}{A}{\BG_m}$ parameterizes precisely the extension $G$ of $A$ by $T$, as in \ref{para:ExtChar}. In particular, $X^* \to A^*(\qb)$ is torsion if and only if $G$ is a split extension, in the sense that there exists a section of $G \to A$ up to isogeny. For a more detailed discussion of the relations between a $1$-motive and its dual, see \cite[pp.3-4]{Bertrand} and \cite[§2.1]{NesaPhD}.
\end{rem}

\section{Results for ranks of $1$-motives}
\label{s:results}

In this section we apply Proposition \ref{prop:ExtGeneral} to deduce Propositions \ref{prop:RankExtOmotBZ}, \ref{prop:RankExtOmot} and \ref{prop:RankExtOmotX}, which give a simpler description of the ranks of $1$-motives (see Definition \ref{defn:Rank}) as dimensions of Ext$^1$ vector spaces. These ranks are crucial elements in the computation of the general dimension formula for the linear space of periods of a $1$-motive in \cite[Theorem 1.4]{HW}, which we have reproduced in Theorem \ref{thm:Formula} for convenience.

\begin{conv}\label{conv:Isogeny}
	In the rest of this article, semiabelian varieties are always considered as elements of $\sav$ (i.e., up to isogeny, see Convention \ref{conv:CatsSAV}). In particular, if $A$ is an abelian variety, by $\End{}{A}$ we mean $\End{\sav}{A}$, which is $\End{\CS \CA}{A} \otimes \BQ$ and is a skew-field if $A$ is simple. Moreover, if we write that an extension of semiabelian varieties is split, we mean that there exists a section up to isogeny.
\end{conv}

Now we have introduced all the ingredients needed to define the ranks of $1$-motives.

\begin{defn}[{\cite[Notation 15.2]{HW}}] \label{defn:Rank}
	Let $M =(X,A,T,G,u)$ be a $1$-motive.
	\begin{enumerate}
		\item If $A \neq 0$, let $B$ be a simple component of $A$; for $\phi \in \Hom{}{A}{B}$, we denote by $\phi(X)$ the image of $X_\BQ$ in $B(\qb)_\BQ$ under the composition of the map $X_\BQ \to A(\qb)_\BQ$ with $\phi$. The $\BQ$-subspace
		\[\sum_{\phi \in \Hom{}{A}{B}} \phi(X) \seq B(\qb)_\BQ\]
		is a finite dimensional left $\End{}{B}$-vector space. The \textit{$X$-rank of $M$ with respect to $B$} is its dimension:
		\[ \rk{B}{X}{M}:=\dim_{\End{}{B}}{\left(\sum_{\phi \in \Hom{}{A}{B}} \phi(X) \right)}.\]
		\item If $A \neq 0$, let $B$ be a simple component of $A$ and consider the right action of $\End{}{B}$ on the dual abelian variety $B^*$ given by $(b, f) \mapsto f^*(b)$ (which corresponds to the usual left action of $\End{}{B^*}$ on $B^*$ under the natural isomorphism sending $f\in \End{}{B}$ to $f^* \in \End{}{B^*}$). For $\psi \in \Hom{}{B}{A}$, we denote by $\psi^*(X^*)$ the image of $X^*_\BQ$ ($X^*$ is the group of characters of $T$, see Definition \ref{defn:CartierDual}) in $B^*(\qb)_\BQ$ under the composition of the morphism $X^*_\BQ \to A^*(\qb)_\BQ$ with $\psi^*$. The $\BQ$-subspace
		\[\sum_{\psi \in \Hom{}{B}{A}} \psi^*(X^*) \seq B^*(\qb)_\BQ\]
		is a finite dimensional right $\End{}{B}$-vector space. The \textit{$T$-rank of $M$ with respect to $B$} is its dimension:
		\[ \rk{B}{T}{M}:=\dim_{\End{}{B}}{\left(\sum_{\psi \in \Hom{}{B}{A}} \psi^*(X^*) \right)}.\]
		\item If $A=0$, for $\phi \in X^*$ we denote by $\phi(X)$ the image of $X$ in $\BG_m(\qb)$ under the composition of the morphism $X \to T(\qb)$ with $\phi$. The \textit{$X$-rank $\rk{\BG_m}{X}{M}$ of $M$ with respect to $\BG_m$} is the rank of the subgroup
		\[ \left(\sum_{\phi \in X^*} \phi(X) \right) \seq \BG_m(\qb).\]
	\end{enumerate}
\end{defn}

\begin{rem}
	Even though $\rk{\BG_m}{X}{M}$ does not appear explicitly in the formula of Theorem \ref{thm:Formula}, for Baker motives it equals $\delta_{\mathrm{inc3}}(M)$ (see Corollary \ref{cor:Baker}).
\end{rem}

\begin{lem} \label{lem:RankDual}
	Let $M=(X,A,T,G,u)$ be a $1$-motive. If $A\neq0$ and $B$ is a simple component of $A$, then
	\[ \rk{B}{X}{M} = \rk{B^*}{T^*}{M^*} \hand \rk{B}{T}{M} = \rk{B^*}{X^*}{M^*}.\]
\end{lem}

\begin{proof}
	Transform the right-hand side of the equalities into the left-hand side using that $\Hom{}{A}{B}$ is naturally isomorphic to $\Hom{}{B^*}{A^*}$ and that double duality of $1$-motives is naturally isomorphic to the identity.
\end{proof}

\begin{para}
	From Definition \ref{defn:Rank} we see directly that, if $A=0$, then $\rk{\BG_m}{X}{M}=0$ if and only if the map $X \to T(\qb)$ is torsion, i.e., if and only if $[X \to T]$ is a split extension of $[X]$ by $[T]$. And we also see that, if $A \neq 0$, then $\rk{B}{T}{M}=0$ for all simple components $B$ of $A$ if and only if $X^* \to A^*(\qb)$ is torsion, i.e., if and only if $G$ is a split extension of $A$ by $T$ (see Remark \ref{rem:GeneralExtensionG}). These are the first hints that there is some relation between the ranks of Definition \ref{defn:Rank} and some Ext$^1$ vector spaces. In the rest of this section, our goal is to unravel completely this relation.
\end{para}

\begin{para} \label{para:MakeThingsClear}
	Let $M=(X,A,T,G,u)$ be a $1$-motive with $A=0$, $T \neq 0$, $X \neq 0$. Without loss of generality, let $X=\BZ^r$ and $T=\BG_m^s$ and consider the maps
	\[ u_{ij}: \BZ \xrightarrow{\iota_i} X \xrightarrow{u} T \xrightarrow{\rho_j} \BG_m,\]
	where $\iota_i$ denotes the inclusion into the $i$-th coordinate and $\rho_j$ denotes the projection onto the $j$-th coordinate. The following equality of subgroups of $\BG_m(\qb)$ holds:
	\[ \sum_{\phi \in X^*} \phi(X) = \sum_{i,j} \img{{u_{ij}}}.\]
	Now consider the composite morphism of $\BQ$-vector spaces
	\[ \Ext{\la M \ra}{[\BZ]}{[\BG_m]} \inj  \Ext{\omot}{[\BZ]}{[\BG_m]} \isom \BG_m(\qb)_\BQ.\]
	Going through the definitions (see \ref{para:Ext}), we see that, for all $i$ and $j$, the point $u_{ij}(1)$ in $\BG(\qb)_\BQ$ (on the right-hand side) is the image of the extension
	\[ [\BZ\xrightarrow{u_{ij}} \BG_m] = \rho_j [X \xrightarrow{u} T] \iota_i = \rho_j M \iota_i \quad \in \Ext{\abgen{M}}{[\BZ]}{[\BG_m]}.\]
	Thus, more generally, we conclude that the $\BQ$-vector space
	\[ \left(\sum_{\phi \in X^*} \phi(X) \right)_\BQ \]
	is canonically isomorphic to the subspace
	\[R_{\BG_m} := \left\{\sum_{i=1}^n \phi_i M \psi_i \ \middle\vert \ n \in \BN, \phi_i \in \Hom{\omot}{[T]}{[\BG_m]}, \psi_i \in \Hom{\omot}{[\BZ]}{[X]}\right\}\]
	of $\Ext{\abgen{M}}{[\BZ]}{[\BG_m]}$. Now we show that $R_{\BG_m}$ is actually the whole $\Ext{\abgen{M}}{[\BZ]}{[\BG_m]}$.
\end{para}

\begin{prop} \label{prop:RankExtOmotBZ}
	Let $M=(X,A,T,G,u)$ be a $1$-motive with $A = 0$, $T \neq 0$ and $X \neq 0$. Then
	\[ \rk{\BG_m}{X}{M} = \dimq {\Ext{\la M \ra}{[\BZ]}{[\BG_m]}}.\]
\end{prop}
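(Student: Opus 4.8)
The plan is to deduce this directly from Proposition \ref{prop:ExtGeneral}, using the translation already carried out in \ref{para:MakeThingsClear}. There it is shown that the $\BQ$-vector space $\left(\sum_{\phi \in X^*}\phi(X)\right)_\BQ$ — whose dimension equals $\rk{\BG_m}{X}{M}$, since the rank of a finitely generated abelian group is the dimension of its rationalization — is canonically isomorphic to the subspace $R_{\BG_m} \seq \Ext{\la M \ra}{[\BZ]}{[\BG_m]}$. So the whole proposition reduces to the claim that $R_{\BG_m}$ exhausts $\Ext{\la M \ra}{[\BZ]}{[\BG_m]}$, and it is precisely for this that Proposition \ref{prop:ExtGeneral} is designed. (Note that the hypotheses $T \neq 0$ and $X \neq 0$ are exactly what is needed for $[\BG_m]$ and $[\BZ]$ to lie in $\la M \ra$, so that the left-hand side is defined.)

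To set up Proposition \ref{prop:ExtGeneral}, I would take $\CB = \omot$, let $\CA$ be the full subcategory of $\omot$ consisting of the $1$-motives of the form $[T']$ with $T'$ a torus, and let $\CC$ be the full subcategory consisting of the $1$-motives of the form $[X']$ with $X'$ a lattice. By the weight filtration these are full abelian subcategories of $\omot$: $\CA$ is equivalent to $\CT_\BQ$ and $\CC$ to the category $\vects{\BQ}$ of lattices up to isogeny, and both are semisimple (the first by Convention \ref{conv:CatsSAV}, the second because it is $\vects{\BQ}$). Moreover $\Hom{\omot}{[T']}{[X']} = 0$ for every torus $T'$ and lattice $X'$, since a morphism of complexes from $[0 \to T']$ to $[X' \to 0]$ vanishes in each degree. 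Thus $\CB$, $\CA$, $\CC$ satisfy the hypotheses of Proposition \ref{prop:ExtGeneral}.

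Now, since $A = 0$ we have $G = T$, and $B := M = [X \to T]$ sits in the canonical short exact sequence $0 \to [T] \to M \to [X] \to 0$ of $\omot$, exhibiting $M$ as an element of $\Ext{\omot}{[X]}{[T]}$ with $[T] \in \CA$ and $[X] \in \CC$. Writing $X = \BZ^r$, $T = \BG_m^s$ as in \ref{para:MakeThingsClear}, the object $A' := [\BG_m]$ is a direct summand of the subobject $[T] = [\BG_m^s]$ of $M$, hence lies in $\la M \ra$, and likewise $C' := [\BZ]$ is a direct summand of the quotient $[X] = [\BZ^r]$ of $M$, hence lies in $\la M \ra$. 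Proposition \ref{prop:ExtGeneral} then gives
\[ \Ext{\la M \ra}{[\BZ]}{[\BG_m]} = \left\{ \sum_{i=1}^n \phi_i M \psi_i \ \middle\vert \ n \in \BN, \ \phi_i \in \Hom{\omot}{[T]}{[\BG_m]}, \ \psi_i \in \Hom{\omot}{[\BZ]}{[X]} \right\} = R_{\BG_m}, \]
and combining this with the canonical isomorphism of \ref{para:MakeThingsClear} yields $\rk{\BG_m}{X}{M} = \dimq{\Ext{\la M \ra}{[\BZ]}{[\BG_m]}}$, as wanted. I do not expect any genuine obstacle here: the substance is entirely contained in Proposition \ref{prop:ExtGeneral} and in the computation of \ref{para:MakeThingsClear}, and the only point requiring a little attention is correctly matching the subobject-type piece $\CA$ and quotient-type piece $\CC$ of the weight filtration and verifying the vanishing $\Hom{\omot}{[T']}{[X']} = 0$ that makes Proposition \ref{prop:ExtGeneral} applicable.
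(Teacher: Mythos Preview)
Your proposal is correct and follows essentially the same approach as the paper: reduce to showing $R_{\BG_m} = \Ext{\la M \ra}{[\BZ]}{[\BG_m]}$ via the identification of \ref{para:MakeThingsClear}, and then apply Proposition \ref{prop:ExtGeneral} with $\CB = \omot$, $\CA$ the tori, and $\CC$ the lattices. Your write-up is in fact more explicit than the paper's in verifying the hypotheses of Proposition \ref{prop:ExtGeneral} and the membership of $[\BG_m]$ and $[\BZ]$ in $\la M \ra$, but the substance is identical.
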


\begin{proof}
	We claim that every element of $\Ext{\la M \ra}{[\BZ]}{[\BG_m]}$ is of the form
	\[\sum_{i=1}^n \phi_i M \psi_i, \quad \text{for }n \in \BN, \phi_i \in \Hom{}{[T]}{[\BG_m]}, \psi_i \in \Hom{}{[\BZ]}{[X]}.\]
	This follows from Proposition \ref{prop:ExtGeneral}, since in the abelian category $\omot$ there is no non-zero morphism from the semisimple abelian subcategory of algebraic tori to the semisimple abelian subcategory of finite free $\BZ$-modules. So the vector space $R_{\BG_m}$ of \ref{para:MakeThingsClear} is the whole $\Ext{\la M \ra}{[\BZ]}{[\BG_m]}$, and (by \ref{para:MakeThingsClear} and by Definition \ref{defn:Rank}) $\rk{\BG_m}{X}{M}$ is its dimension.
\end{proof}

\begin{rem}
	If $T=0$ or $X=0$, then $\Ext{\la M \ra}{[\BZ]}{[\BG_m]}$ is not defined, because one of $[\BZ]$ or $[\BG_m]$ is not in $\abgen{M}$.
\end{rem}

\begin{para} \label{para:MakeThingsClearB}
	Let $M=(X,A,T,G,u)$ be a $1$-motive with $A \neq 0$ and $T \neq 0$, and let $B$ be a simple component of $A$. The following reasoning is similar to that of \ref{para:MakeThingsClear}, but it uses the identification of $\End{}{B}$-vector spaces
	\[B^*(\qb)_\BQ \cong \Ext{\sav}{B}{\BG_m}\]
	(see \ref{para:ExtChar}). Now recall that $X^*_\BQ = \Hom{}{T}{\BG_m}$ and that the map $X^* \to A^*(\qb)$ corresponds to the choice of the semiabelian variety $G$ of $M$ (see Remark \ref{rem:GeneralExtensionG}). Going through the definitions, as in \ref{para:MakeThingsClear} we see that under the above identification the $\End{}{B}$-subspace
	\[ \sum_{\psi \in \Hom{}{B}{A}} \psi^*(X^*) \seq B^*(\qb)_\BQ\]
	corresponds to the $\End{}{B}$-subspace
	\[ R_B:= \left\{\sum_{i=1}^n \phi_i G \psi_i \ \middle\vert \ n \in \BN, \phi_i \in \Hom{}{T}{\BG_m}, \psi_i \in \Hom{}{B}{A}\right\}\]
	of $\Ext{\abgen{G}}{B}{\BG_m}$. Also in this case, we show that $R_B$ is actually the whole $\Ext{\abgen{G}}{B}{\BG_m}$.
\end{para}

\begin{prop} \label{prop:RankExtOmot}
	Let $M=(X,A,T,G,u)$ be a $1$-motive with $A \neq 0$ and $T \neq 0$, and let $B$ be a simple component of $A$. Then
	\[ \rk{B}{T}{M} = \dim_{\End{}{B}} {\left(\Ext{\la G \ra}{B}{\BG_m}\right)} = \dim_{\End{}{B}} \left(\Ext{\la M \ra}{[B]}{[\BG_m]} \right).\]
\end{prop}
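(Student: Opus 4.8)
The plan is to prove the two displayed equalities in turn. For the first equality, the idea is to invoke Proposition~\ref{prop:ExtGeneral} in the abelian category $\CB=\sav$ of semiabelian varieties up to isogeny, taking as the two full semisimple abelian subcategories $\CA=\CT_\BQ$ (tori) and $\CC=\CA_\BQ$ (abelian varieties); the hypothesis $\Hom{\sav}{A}{C}=0$ for a torus $A$ and an abelian variety $C$ holds because the image of a torus under a homomorphism to an abelian variety is a closed subgroup which is at once affine and proper, hence trivial. The distinguished extension is $G$ itself, regarded as an element of $\Ext{\sav}{A}{T}$ (using that $G$ is an extension of $A$ by $T$), while the two arguments of the Ext$^1$ group are $\BG_m$, which lies in $\la G\ra$ since $T\neq 0$ makes $\BG_m$ a subquotient up to isogeny of the subobject $T$ of $G$, and $B$, which lies in $\la G\ra$ since $A\neq 0$ and $B$ is a simple component of the quotient $A$ of $G$. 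Proposition~\ref{prop:ExtGeneral} then gives
\[ \Ext{\la G\ra}{B}{\BG_m}=\left\{\sum_{i=1}^n\phi_iG\psi_i \ \middle\vert\ n\in\BN,\ \phi_i\in\Hom{}{T}{\BG_m},\ \psi_i\in\Hom{}{B}{A}\right\}=R_B, \]
the $\End{}{B}$-subspace introduced in \ref{para:MakeThingsClearB}, and by the discussion there, via the identification $B^*(\qb)_\BQ\cong\Ext{\sav}{B}{\BG_m}$ of \ref{para:ExtChar}, the $\End{}{B}$-dimension of $R_B$ is exactly $\rk{B}{T}{M}$. One only needs to observe that the comparison isomorphism of \ref{para:MakeThingsClearB} is $\End{}{B}$-linear, which is immediate from the definitions of the two actions.

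For the second equality the point is that this particular Ext$^1$ space is not affected by passing from $\sav$ to $\omot$. The functor $G'\mapsto[0\to G']$ embeds $\sav$ into $\omot$ exactly and fully faithfully, and a weight argument shows that any extension of $[B]$ by $[\BG_m]$ in $\omot$ has trivial lattice part and abelian part isogenous to $B$: the exact sequence of weight-$0$ graded pieces forces that of the middle term to vanish, since those of $[\BG_m]$ and $[B]$ do, and likewise the weight-$(-1)$ graded piece of the middle term is $[B]$. Hence every such extension is of the form $[0\to G']$ with $G'$ an extension of $B$ by $\BG_m$, so $\Ext{\omot}{[B]}{[\BG_m]}\cong\Ext{\sav}{B}{\BG_m}$. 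It remains to see that this restricts to an isomorphism $\Ext{\la M\ra}{[B]}{[\BG_m]}\cong\Ext{\la G\ra}{B}{\BG_m}$, that is, that $[0\to G']\in\la M\ra$ if and only if $G'\in\la G\ra$. The implication $\Leftarrow$ is clear since $[0\to G^n]=W_{-1}(M^n)$ is a subobject of $M^n$; for $\Rightarrow$, since $[0\to G']$ has weights in $\{-1,-2\}$ and $W_{-1}$ is a subfunctor of the identity on $\omot$, any presentation of $[0\to G']$ as a subquotient of a power $M^n$ factors through $W_{-1}(M^n)=[0\to G^n]$, so $G'$ is a subquotient of $G^n$. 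The $\End{}{B}$-module structures correspond under the embedding, so the two Ext$^1$ spaces have equal $\End{}{B}$-dimension, which finishes the argument.

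The only subtle points are bookkeeping. One should check that $\Ext{\la M\ra}{[B]}{[\BG_m]}$, understood as the set of extensions whose middle term lies in $\la M\ra$, agrees with the Yoneda Ext$^1$ of the abelian category $\la M\ra$ and is indeed a subgroup of $\Ext{\omot}{[B]}{[\BG_m]}$; this holds because $\la M\ra$ is closed under subquotients and finite direct sums, so Baer sums of such extensions stay in $\la M\ra$. The reduction to $W_{-1}$ uses only functoriality of the weight filtration (that $W_{-1}$ is a subfunctor of the identity) together with exactness of the weight-graded functors on $\omot$, both standard. Everything else is a direct unwinding of the constructions already recorded in \ref{para:Ext}, \ref{para:ExtChar} and \ref{para:MakeThingsClearB}, so I do not anticipate any real obstacle.
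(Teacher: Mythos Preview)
Your proof is correct and follows the same approach as the paper: apply Proposition~\ref{prop:ExtGeneral} in $\sav$ with the subcategories $\CT_\BQ$ and $\CA_\BQ$ to obtain the first equality via \ref{para:MakeThingsClearB}, then identify the two Ext$^1$ spaces under $\Ext{\sav}{B}{\BG_m}\cong\Ext{\omot}{[B]}{[\BG_m]}$ for the second. Your weight-filtration argument for the restriction $\Ext{\la G\ra}{B}{\BG_m}\cong\Ext{\la M\ra}{[B]}{[\BG_m]}$ merely supplies details that the paper leaves implicit.
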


\begin{proof}
	Like for Proposition \ref{prop:RankExtOmotBZ}, but with categories $\sav$, $\CT_\BQ$, $\CA_\BQ$. The last equality follows from the fact that the two Ext$^1$ vector spaces are identified under the isomorphism of $\End{}{B}$-vector spaces $\Ext{\sav}{B}{\BG_m} \cong \Ext{\omot}{[B]}{[\BG_m]}$.
\end{proof}

There is also a ``dual version'' of Proposition \ref{prop:RankExtOmot}:

\begin{prop} \label{prop:RankExtOmotX}
	Let $M=(X,A,T,G,u)$ be a $1$-motive with $A \neq 0$ and $X \neq 0$, and let $B$ be a simple component of $A$. Then
	\[ \rk{B}{X}{M} = \dim_{\End{}{B}} {\left(\Ext{\la M \ra}{[\BZ]}{[B]}\right)}.\]
\end{prop}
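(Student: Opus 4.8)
The plan is to proceed exactly in parallel with Propositions \ref{prop:RankExtOmotBZ} and \ref{prop:RankExtOmot}, using Cartier duality to reduce the $X$-rank statement to a $T$-rank statement that we have essentially already understood. Concretely, by Proposition \ref{prop:RankDual} we have $\rk{B}{X}{M} = \rk{B^*}{T^*}{M^*}$, where $M^*=(X^*,A^*,T^*,G^*,u^*)$ is the Cartier dual and $B^*$ is a simple component of $A^*$ (it is simple because duality is an anti-equivalence on abelian varieties up to isogeny, and $\End{}{B^*}\cong \End{}{B}$). Since $A\neq 0$ and $X\neq 0$, the dual motive $M^*$ has $A^*\neq 0$ and $T^*\neq 0$ (the torus $T^*$ has character group $X\neq 0$), so Proposition \ref{prop:RankExtOmot} applies to $M^*$ and gives
\[ \rk{B^*}{T^*}{M^*} = \dim_{\End{}{B^*}}\left(\Ext{\la M^* \ra}{[B^*]}{[\BG_m]}\right). \]
It therefore remains to identify $\Ext{\la M^* \ra}{[B^*]}{[\BG_m]}$ with $\Ext{\la M \ra}{[\BZ]}{[B]}$ as $\End{}{B}$-vector spaces.

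The key step is this last identification, and it should come from the general Cartier-duality anti-equivalence for $1$-motives. Cartier duality is an exact, additive, contravariant self-equivalence of $\omot$ sending $[\BZ]$ to $[\BG_m]$, sending $[\BG_m]$ to $[\BZ]$, sending $[B]$ to $[B^*]$, and sending $M$ to $M^*$; in particular it carries $\la M \ra$ to $\la M^* \ra$. An anti-equivalence of abelian categories induces isomorphisms on $\mathrm{Ext}^1$ groups with the arguments swapped, so
\[ \Ext{\la M \ra}{[\BZ]}{[B]} \cong \Ext{\la M^* \ra}{[B]^\vee}{[\BZ]^\vee} = \Ext{\la M^* \ra}{[B^*]}{[\BG_m]}. \]
One has to check that this isomorphism is compatible with the action of $\End{}{B}$: the action on the left is precomposition/postcomposition by endomorphisms of $B$ (viewed inside $[B]$), and under duality an endomorphism $f$ of $B$ goes to $f^*$ on $B^*$, so the $\End{}{B}$-module structures match via the isomorphism $\End{}{B}\cong\End{}{B^*}$, and the $\End{}{B}$-dimensions agree. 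Alternatively — and this is probably cleaner to write — one can bypass Proposition \ref{prop:RankExtOmot} entirely and repeat the argument of \ref{para:MakeThingsClearB} and Proposition \ref{prop:RankExtOmot} directly: unwind Definition \ref{defn:Rank}(1), use the identification $B(\qb)_\BQ \cong \Ext{\sav}{\BG_m}{B}$ (the "dual" of \ref{para:ExtChar}, i.e. extensions of a torus by an abelian variety, which vanish after isogeny in one direction but not the other — here one uses $\Ext{\sav}{\BG_m}{B}\cong \Hom{}{X_*(\BG_m)}{B(\qb)}$ appropriately, equivalently extensions classified by points of $B$), to see that $\sum_{\phi\in\Hom{}{A}{B}}\phi(X)_\BQ$ corresponds to
\[ \left\{ \sum_{i=1}^n \phi_i G^\sharp \psi_i \ \middle\vert\ n\in\BN,\ \phi_i\in\Hom{}{B}{A},\ \psi_i\in\Hom{}{\BZ}{X} \right\} \]
inside $\Ext{\la \cdot \ra}{[\BZ]}{[B]}$, where $G^\sharp$ is the relevant extension built from $u:X\to G(\qb)$, and then invoke Proposition \ref{prop:ExtGeneral} with $\CB=\omot$, $\CA$ the semisimple subcategory generated by $[B]$ (abelian varieties up to isogeny), $\CC$ the semisimple subcategory of finite free $\BZ$-modules, noting $\Hom{\omot}{[B]}{[\BZ]}=0$, to conclude that this subspace is all of $\Ext{\la M \ra}{[\BZ]}{[B]}$.

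The main obstacle is the bookkeeping around which $\mathrm{Ext}$ group realizes the rank and getting the direction of the semiabelian extension right: unlike in \ref{para:MakeThingsClearB}, where $\Ext{\sav}{B}{\BG_m}\cong B^*(\qb)_\BQ$ is an honest isomorphism, here the relevant object is an extension of an abelian variety by $[\BZ]$ inside $\omot$ (equivalently, the datum of $u:X\to G(\qb)$ composed down to $B$), and one must be careful that $[\BZ]$ plays the role of the semisimple "$\CA$"-side only after passing to $\omot$ — it is not semisimple in $\omot$ by itself, but the subcategory generated by $[\BZ]$ (finite free $\BZ$-modules up to isogeny, i.e. $\BQ$-vector spaces) is. Provided the hypotheses of Proposition \ref{prop:ExtGeneral} are verified with these choices, the argument is otherwise a routine transcription of the proof of Proposition \ref{prop:RankExtOmotBZ}. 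Given the structure of the paper, I expect the authors simply say "dualize Proposition \ref{prop:RankExtOmot}", i.e. take the first route above, and leave the compatibility checks to the reader or to \cite{NesaPhD}.
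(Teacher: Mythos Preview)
Your primary route---reduce to $M^*$ via Proposition~\ref{prop:RankDual}, apply Proposition~\ref{prop:RankExtOmot}, then identify the two $\mathrm{Ext}^1$ spaces---is exactly the paper's strategy, and your prediction in the last paragraph is accurate. The only difference is in how the final identification is carried out: you invoke Cartier duality as an exact anti-equivalence of $\omot$ to obtain $\Ext{\la M \ra}{[\BZ]}{[B]} \cong \Ext{\la M^* \ra}{[B^*]}{[\BG_m]}$ in one stroke, whereas the paper uses the \emph{first} equality of Proposition~\ref{prop:RankExtOmot} (so $\Ext{\la G^* \ra}{B^*}{\BG_m}$), invokes the concrete correspondence between $G^*$ and $[X\to A]$ from Remark~\ref{rem:GeneralExtensionG}, and then applies Proposition~\ref{prop:ExtGeneral} once more on the $\la M\ra$ side to see that the matched generators $\phi G^*\psi \leftrightarrow \psi^*[X\to A]\phi^*$ span both spaces. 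Your version is tidier if one is willing to take the full duality anti-equivalence as a black box; the paper's version stays closer to the ingredients already laid out in the text.

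One correction to your alternative sketch: the identification $B(\qb)_\BQ \cong \Ext{\sav}{\BG_m}{B}$ is false---extensions of $\BG_m$ by an abelian variety split in $\sav$. The correct ambient group for the direct argument is $\Ext{\omot}{[\BZ]}{[B]} \cong B(\qb)_\BQ$ (a $1$-motive $[\BZ\xrightarrow{b} B]$ is precisely a point of $B$), and the object playing the role of ``$B$'' in Proposition~\ref{prop:ExtGeneral} is the quotient $[X\to A]$ of $M$, not a semiabelian variety. With that fix, your alternative becomes essentially what the paper does on the $\la M\ra$ side.
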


\begin{proof}
	From Lemma \ref{lem:RankDual} and from Proposition \ref{prop:RankExtOmot} applied to $M^*$ we deduce that
	\[ \rk{B}{X}{M} = \dim_{\End{}{B}} {\left(\Ext{\la G^* \ra}{B^*}{\BG_m}\right)}\]
	(since $\End{}{B}$ and $\End{}{B^*}$ are canonically isomorphic through duality). It thus suffices to show that
	\[ \Ext{\la G^* \ra}{B^*}{\BG_m} \cong \Ext{\la M \ra}{[\BZ]}{[B]}.\]
	The extension $G^*$ of $A^*$ by $T^*$ corresponds to the extension $[X \to A]$ of $[X]$ by $[A]$ (see Remark \ref{rem:GeneralExtensionG}). For every $\phi \in \Hom{}{T^*}{\BG_m}$ and $\psi \in \Hom{}{B^*}{A^*}$ we obtain an extension $\phi G^* \psi$ in the space on the left-hand side which corresponds to an extension $\psi^* [X \to A] \phi^*$ in the space on the right-hand side. Applying Proposition \ref{prop:ExtGeneral} to both sides, we see that such extensions generate the two spaces.
\end{proof}

\begin{rem}
	In Proposition \ref{prop:RankExtOmotBZ} we compute a $\BQ$-dimension, whereas in Propositions \ref{prop:RankExtOmot} and \ref{prop:RankExtOmotX} we compute $\End{}{B}$-dimensions. This apparent discrepancy is simply due to the fact that $\End{\omot}{[\BG_m]} = \BQ$. In fact, since $\End{\omot}{[\BZ]}=\BQ$ as well, in all three propositions what we compute is actually the dimension of a free $\End{\CB}{A'} \otimes \End{\CB}{C'}^\mathrm{op}$-module of the form $\Ext{\abgen{B}}{C'}{A'}$. This point of view is developed further in \cite{HK}.
\end{rem}

\section{Corollaries and examples}

\begin{cor} \label{cor:SimpleRank1}
	Let $M=(X,A,T,G,u)$ be a $1$-motive.
	\begin{itemize}
		\item If $A$ and $T$ are simple and $G$ is non-split, then $\rk{A}{T}{M}=1$.
		\item If $A$ and $X$ are simple and $G$ is non-split, then $\rk{A}{X}{M}=1$.
		\item If $A=0$, $T$ and $X$ are simple and $X \to T(\qb)$ is non-torsion, then $\rk{\BG_m}{X}{M}=1$.
	\end{itemize}
\end{cor}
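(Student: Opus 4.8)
The plan is to treat the three statements uniformly. By Propositions \ref{prop:RankExtOmotBZ}, \ref{prop:RankExtOmot} and \ref{prop:RankExtOmotX}, each of the three ranks is the dimension, over a suitable skew-field, of an $\mathrm{Ext}^1$ space: over $\BQ$ in the third bullet, and over $\End{}{A}$ --- which is a skew-field precisely because $A$ is simple --- in the first two. Since $A$ is simple it is its own unique simple component, so the object ``$B$'' of Definition \ref{defn:Rank} and of Propositions \ref{prop:RankExtOmot}, \ref{prop:RankExtOmotX} may be taken to be $A$; and since in the relevant cases $T\cong\BG_m$ and $X\cong\BZ$, the Hom-groups $\Hom{}{T}{\BG_m}$, $\Hom{}{[T]}{[\BG_m]}$ and $\Hom{}{[\BZ]}{[X]}$ over which one sums in the descriptions $R_B$, $R_{\BG_m}$ of \ref{para:MakeThingsClearB} and \ref{para:MakeThingsClear} all collapse to $\BQ$. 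Combined with Proposition \ref{prop:ExtGeneral} this identifies the $\mathrm{Ext}^1$ space in each bullet with the \emph{cyclic} module generated by a single distinguished extension: the class of $G$ in $\Ext{\sav}{A}{\BG_m}\cong A^*(\qb)_\BQ$ for the first bullet, the class of $[X\to A]$ in $\Ext{\omot}{[\BZ]}{[A]}$ for the second, and the class of $M=[X\to T]$ in $\Ext{\omot}{[\BZ]}{[\BG_m]}$ for the third. A nonzero cyclic module over a skew-field has dimension one, so the whole statement reduces to checking that this distinguished generator is nonzero.

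Concretely, for the first bullet I would apply Proposition \ref{prop:RankExtOmot} to get $\rk{A}{T}{M}=\dim_{\End{}{A}}\Ext{\la G\ra}{A}{\BG_m}$, use \ref{para:ExtChar} and \ref{para:MakeThingsClearB} (with $T\cong\BG_m$, so the sum over $\Hom{}{T}{\BG_m}$ collapses to scalars) to see that this space is the $\End{}{A}$-module generated by the class of $G$ inside $A^*(\qb)_\BQ$, and invoke Remark \ref{rem:GeneralExtensionG}, according to which that class is nonzero exactly when $G$ is non-split. The third bullet is the same argument with Proposition \ref{prop:RankExtOmotBZ} and \ref{para:MakeThingsClear} in place of Proposition \ref{prop:RankExtOmot} and \ref{para:MakeThingsClearB}: since $T\cong\BG_m$ and $X\cong\BZ$ one gets $\Ext{\la M\ra}{[\BZ]}{[\BG_m]}=\BQ\cdot[M]$, and $[M]\neq 0$ precisely when $X\to T(\qb)$ is non-torsion, which is the hypothesis (compare the discussion just before Proposition \ref{prop:RankExtOmotBZ}). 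For the second bullet the cleanest route is duality: Proposition \ref{prop:RankDual} gives $\rk{A}{X}{M}=\rk{A^*}{T^*}{M^*}$, where $A^*$ is again simple and $T^*\cong\BG_m$ (its character group being $X\cong\BZ$), so the first bullet applies to $M^*$ once one knows that the semiabelian variety $G^*$ of $M^*$ is non-split; by Remark \ref{rem:GeneralExtensionG} this $G^*$ is the Cartier dual of the extension $[X\to A]$ of $[X]$ by $[A]$, so its non-splitting is the content of the hypothesis. Alternatively one can run the argument of the first bullet directly with Proposition \ref{prop:RankExtOmotX} and the class of $[X\to A]$.

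The whole of the corollary is carried by the three preceding propositions, so the only steps that require care are two pieces of bookkeeping: first, that the simplicity of $A$, $T$ and $X$ really does collapse all the relevant Hom-groups to scalars, so that the $\mathrm{Ext}^1$ space is genuinely cyclic --- not merely finitely generated, which would give only the inequality $\geq 1$ for the dimension; and second, that ``non-split'' translates, in each of the three cases, exactly into the nonvanishing of the distinguished generator, which is what Remark \ref{rem:GeneralExtensionG} and the paragraph preceding Proposition \ref{prop:RankExtOmotBZ} provide. Neither is difficult, and beyond them the only ingredient is the elementary fact that a nonzero cyclic module over a skew-field is one-dimensional.
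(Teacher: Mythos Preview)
Your approach is exactly the paper's: the proof there observes that when $T$ is simple, $\Hom{}{T}{\BG_m}=\BQ$, so the space $R_A$ of \ref{para:MakeThingsClearB} is the right $\End{}{A}$-module generated by the single element $G$, which is nonzero by the non-splitting assumption and hence one-dimensional; it then declares the other cases ``similar,'' and your write-up is just a more explicit unpacking of those cases via Propositions \ref{prop:RankExtOmotBZ}--\ref{prop:RankExtOmotX} and duality.

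One caution on the second bullet: the hypothesis as printed is that $G$ (the extension of $A$ by $T$) is non-split, but what your duality argument actually needs is that $G^*$---equivalently the extension $[X\to A]$ of $[X]$ by $[A]$---is non-split, and these are independent conditions. This looks like an imprecision in the corollary's statement rather than in your reasoning (the paper's own proof does not spell out this case), but you should not assert that ``its non-splitting is the content of the hypothesis'' without flagging the mismatch.
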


\begin{proof}
	If $T$ is simple it is isomorphic to $\BG_m$, so $\Hom{}{T}{\BG_m} = \BQ$, which implies that the space $R_A$ in \ref{para:MakeThingsClearB} is generated by the element $G$ (which is non-trivial by assumption) as a right $\End{}{A}$-vector space. Therefore, it has dimension $1$. The other cases are similar.
\end{proof}

\begin{cor}
	\label{cor:Baker}
	Let $M=[X \to T]$ be a Baker motive, i.e., a $1$-motive with $A=0$, $T \neq 0$ and $X \neq 0$. The linear space of periods of $M$ has dimension
	\[ \delta(M) = \delta([T]) + \delta([X]) + \rk{\BG_m}{X}{M} = 2+ \dimq {\Ext{\la M \ra}{[\BZ]}{[\BG_m]}}.\]
\end{cor}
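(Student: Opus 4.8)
The strategy is to specialize the Huber--W\"ustholz formula (Theorem \ref{thm:Formula}) to the case $A = 0$ and then to translate the surviving terms by means of Example \ref{ex:Simple} and Proposition \ref{prop:RankExtOmotBZ}. Since $M$ is a Baker motive, its abelian part vanishes, so the decomposition of $A$ into simple factors is empty and every summand in Theorem \ref{thm:Formula} indexed by the simple factors $B_i$ of $A$ is zero; that is, $\delta_2(M) = \delta_3(M) = \delta_{\mathrm{inc2}}(M) = 0$. What remains is
\[ \delta(M) = \delta_{\mathrm{Ta}}(M) + \delta_{\mathrm{alg}}(M) + \delta_{\mathrm{inc3}}(M), \]
and, since $T \neq 0$ and $X \neq 0$, the first two terms are $\delta_{\mathrm{Ta}}(M) = \delta([T]) = 1$ and $\delta_{\mathrm{alg}}(M) = \delta([X]) = 1$ by the description of those terms in Theorem \ref{thm:Formula} (equivalently, by Example \ref{ex:Simple}).

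The first asserted equality then amounts to the identification $\delta_{\mathrm{inc3}}(M) = \rk{\BG_m}{X}{M}$. When $A = 0$ the recipe for $\delta_{\mathrm{inc3}}$ given in \cite[§17]{HW} collapses to exactly this rank; alternatively, one can read it off from the explicit computation of the periods of a Baker motive $M = [\BZ^r \xrightarrow{(\alpha_{jk})} \BG_m^s]$ recalled in Example \ref{ex:Baker} and carried out in \cite[§10.2]{HW}: there $\CP(M)$ is the $\qb$-span of $1$, $2\pi i$ and the $\log(\alpha_{jk})$, and Baker's theorem on linear independence of logarithms forces its dimension to be $2$ plus the rank of the subgroup of $\qb^\times$ generated by the $\alpha_{jk}$, which by \ref{para:MakeThingsClear} is $\rk{\BG_m}{X}{M}$. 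Comparing with the previous paragraph yields $\delta_{\mathrm{inc3}}(M) = \rk{\BG_m}{X}{M}$, hence $\delta(M) = \delta([T]) + \delta([X]) + \rk{\BG_m}{X}{M}$.

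For the second equality I would substitute $\delta([T]) = \delta([X]) = 1$ (Example \ref{ex:Simple}) to obtain $\delta([T]) + \delta([X]) + \rk{\BG_m}{X}{M} = 2 + \rk{\BG_m}{X}{M}$, and then invoke Proposition \ref{prop:RankExtOmotBZ} to replace $\rk{\BG_m}{X}{M}$ by $\dimq{\Ext{\la M \ra}{[\BZ]}{[\BG_m]}}$; this is legitimate precisely because the hypotheses $T \neq 0$, $X \neq 0$ guarantee that both $[\BG_m]$ and $[\BZ]$ lie in $\la M \ra$, so that the Ext group is defined.

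The only nontrivial input is the identification $\delta_{\mathrm{inc3}}(M) = \rk{\BG_m}{X}{M}$: this is where genuine transcendence theory enters, via Baker's theorem as used in \cite[§17]{HW}, and where one must match the \emph{ad hoc} quantity $\delta_{\mathrm{inc3}}$ of \cite{HW} with $\rk{\BG_m}{X}{M}$ from Definition \ref{defn:Rank}. Everything else --- the vanishing of the abelian-part summands, the values $\delta([T]) = \delta([X]) = 1$, and the passage to the Ext group --- is formal, the last step being Proposition \ref{prop:RankExtOmotBZ}.
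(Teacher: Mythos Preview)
Your route differs from the paper's, and the difference matters for the logical structure. The paper's proof is a one-liner: combine \cite[Proposition 15.10]{HW} (which gives the first equality $\delta(M) = \delta([T]) + \delta([X]) + \rk{\BG_m}{X}{M}$ directly for Baker motives) with Proposition~\ref{prop:RankExtOmotBZ}. You instead specialize the general formula of Theorem~\ref{thm:Formula} and then need to identify $\delta_{\mathrm{inc3}}(M)$ with $\rk{\BG_m}{X}{M}$.

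That identification is the weak point. In this paper's internal logic it is a \emph{consequence} of Corollary~\ref{cor:Baker}, not an input to it: see the Remark after Definition~\ref{defn:Rank}, which explicitly derives $\delta_{\mathrm{inc3}}(M) = \rk{\BG_m}{X}{M}$ from the corollary. So invoking it to prove the corollary is circular within the paper. Your fallback via Baker's theorem has the same problem in reverse: the Remark immediately following Corollary~\ref{cor:Baker} recovers Baker's theorem \emph{from} the corollary, and in any case once you have used Baker to compute $\dim\CP(M) = 2 + \rk{\BG_m}{X}{M}$ directly you have already established the first equality without ever touching Theorem~\ref{thm:Formula} or $\delta_{\mathrm{inc3}}$ --- the detour through the general formula becomes pointless. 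Your appeal to \cite[\S17]{HW} may well be salvageable (one would need to check that the recipe there visibly collapses when $A=0$), but it is not what the paper does, and \cite[Proposition~15.10]{HW} is the intended shortcut. The second equality, via Proposition~\ref{prop:RankExtOmotBZ}, you handle exactly as the paper does.
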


\begin{proof}
	Combine \cite[Proposition 15.10]{HW} and Proposition \ref{prop:RankExtOmotBZ}.
\end{proof}

\begin{rem}
	We have mentioned in Example \ref{ex:Baker} that the periods of a Baker motive
	\[M=[\BZ^r \xrightarrow{(\alpha_{jk})} \BG_m^s],\]
	are spanned over $\qb$ by $1,2\pi i$ and $\{\log(\alpha_{jk})\}_{jk}$. Since we have also seen in Example \ref{ex:Simple} that the periods of $[X]$ and $[T]$ are spanned, respectively, by $1$ and $2\pi i$, and since both $2\pi i$ and $\log(\alpha_{jk})$ are transcendental (if $\alpha_{jk}\neq 1$), we deduce from Corollary \ref{cor:Baker} that
	\[ \rk{\BG_m}{X}{M} = \dimqb{\qb\la2\pi i, \log(\alpha_{11}),\dots,\log(\alpha_{sr})\ra}-1.\]
	Note that we need to keep (and then subtract) the contribution of $2\pi i$ for the eventuality that it is linearly dependent from the $\log(\alpha_{jk})$'s. Going back to Definition \ref{defn:Rank}, where $\rk{\BG_m}{X}{M}$ was introduced, we see that
	\[ \rk{\BG_m}{X}{M} = \mathrm{rk}_\BZ\la \alpha_{11}, \dots,\alpha_{sr} \ra,\]
	where $\la \alpha_{11}, \dots,\alpha_{sr} \ra$ denotes the multiplicative subgroup of $\BG_m(\qb)$ spanned by the $\alpha_{jk}$'s. Putting these two equalities together we recover Baker's theorem about linear forms in logarithms of algebraic numbers, which (as reformulated in \cite[Theorem 10.5]{HW}) states precisely that, given algebraic numbers $\alpha_1,\dots,\alpha_n \in \qb^\times$, we have
	\[ \dimqb{\qb\la2\pi i, \log(\alpha_1),\dots,\log(\alpha_n)\ra}-1= \mathrm{rk}_\BZ\la \alpha_1, \dots,\alpha_n \ra.\]
	The original reference is \cite[Corollary 1]{Bak}.
\end{rem}

\begin{para}
	In this article we have been concerned with Ext$^1$ groups of $1$-motives. For the sake of completeness, let us conclude with a few remarks about higher Ext groups of $1$-motives. The category of $1$-motives has cohomological dimension $1$ (see \cite[Proposition 3.2.4]{Orgogozo}), so Ext$^n_\omot$ groups vanish for $n > 1$. However, if we restrict to the abelian subcategory $\abgen{M}$ generated by a $1$-motive $M$, this might not be the case. In fact, Huber and Kalck show in \cite{HK} that higher Ext groups play a crucial role in simplifying the summand $\delta_\mathrm{inc3}(M)$ in the formula of \cite[Theorem 1.4]{HW}. For this reason, we want to conclude this article with an example of a non-vanishing Ext$^2_\abgen{M}$ group (thanks to Annette Huber for suggesting this example). To explain it, we first need a general lemma. Recall from Convention \ref{conv:Isogeny} that we are considering semiabelian varieties up to isogeny.
\end{para}

\begin{lem} \label{lem:HomAG}
	Let $A\neq 0$ be an abelian variety and let $G$ be an extension of $A$ by a torus $T$. If $G$ is split, then $\Hom{}{A}{G}\neq 0$. If $A$ is simple, the two conditions are equivalent.
\end{lem}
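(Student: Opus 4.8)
The plan is to prove the two implications separately: the direction ``$G$ split $\Rightarrow \Hom{}{A}{G} \neq 0$'' for an arbitrary abelian variety $A \neq 0$, and the converse under the simplicity hypothesis.

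The first direction is immediate. Write $\pi \colon G \to A$ for the projection in the exact sequence $0 \to T \to G \xrightarrow{\pi} A \to 0$. If this extension splits in $\sav$, then $\pi$ admits a section $s \colon A \to G$; since $\pi \circ s = \mathrm{id}_A \neq 0$, the morphism $s$ is non-zero, so $\Hom{}{A}{G} \neq 0$.

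For the converse, I would assume $A$ is simple, pick a non-zero $f \in \Hom{}{A}{G}$, and set $g := \pi \circ f \in \End{}{A}$. The crucial point is that $g \neq 0$: indeed, $\sav$ is an abelian category and $0 \to T \to G \xrightarrow{\pi} A \to 0$ is exact in it, so if $g$ vanished then $f$ would factor through $\ker{\pi} = T$, yielding a non-zero morphism from an abelian variety to a torus --- which is impossible, since a morphism of group varieties from a proper variety to an affine one is constant, hence zero. Granting $g \neq 0$, simplicity of $A$ makes $\End{}{A}$ a skew-field, so $g$ is invertible; then $f \circ g^{-1} \colon A \to G$ satisfies $\pi \circ (f \circ g^{-1}) = \mathrm{id}_A$, i.e. it is a section of $\pi$, and the extension splits in $\sav$. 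This establishes the equivalence when $A$ is simple.

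The only ingredient beyond formal diagram-chasing in the abelian category $\sav$ is the classical vanishing $\Hom{\sav}{A}{T} = 0$ for an abelian variety $A$ and a torus $T$; this is really the heart of the matter, and once it is in hand there is no serious obstacle.
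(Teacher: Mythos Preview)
Your proof is correct and matches the paper's argument essentially verbatim: both directions are handled exactly as in the paper, using a section for the easy implication and, for the converse, composing with $\pi$, invoking $\Hom{}{A}{T}=0$ to rule out factoring through the torus, and using that $\End{}{A}$ is a skew-field when $A$ is simple to produce a section.
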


\begin{proof}
	Denote by $\pi:G \to A$ the structure map. If $G$ is split, by definition there is a section $\psi:A \to G$ of $\pi$, so $\Hom{}{A}{G} \neq 0$. Conversely, if $A$ is simple and there is a non-trivial map $\psi:A \to G$, then $\alpha:= \pi \circ \psi$ cannot be zero because otherwise $\psi$ would factor through $T$, but there are no non-zero maps from an abelian variety to a torus. So $\alpha \neq 0$; but since $A$ is simple, $\End{}{A}$ is a division ring, so $\alpha$ is invertible. In particular, $\psi \circ \alpha^{-1} :A \to G$ is a section of $\pi$, so $G$ is a split extension.
\end{proof}

\begin{prop}
	Let $T$ be a non-trivial torus, $A$ a simple abelian variety, and $G$ a non-split extension of $A$ by $T$. Let $x$ be a non-torsion point of $A(\qb)$ and let $M$ be the $1$-motive
	\[ M:= [\BZ \xrightarrow{x} A] \oplus [G] = [\BZ \xrightarrow{(0,x)} G \oplus A].\]
	Then
	\[\Extn{\abgen{M}}{2}{[\BZ]}{[T]} \neq 0.\]
\end{prop}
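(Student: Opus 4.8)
The plan is to realise the desired class as a connecting homomorphism. Since $[G]$ is a direct summand of $M$, the short exact sequence $0\to[T]\to[G]\xrightarrow{\pi}[A]\to0$ (with $[T]=W_{-2}[G]$ and $\pi$ the structure map of $G$) lies in $\la M\ra$, and I would apply $\Hom{\la M\ra}{[\BZ]}{-}$ to it to obtain the exact piece
\[ \Ext{\la M\ra}{[\BZ]}{[G]}\xrightarrow{\ \pi_*\ }\Ext{\la M\ra}{[\BZ]}{[A]}\xrightarrow{\ \partial\ }\Extn{\la M\ra}{2}{[\BZ]}{[T]}. \]
The Kummer $1$-motive $[\BZ\xrightarrow{x}A]$ is a direct summand of $M$, hence defines a class $\eta$ in $\Ext{\la M\ra}{[\BZ]}{[A]}$, and $\eta\neq0$ because $x$ is non-torsion; moreover $\partial(\eta)$ is the Yoneda $2$-extension obtained by splicing $0\to[T]\to[G]\to[A]\to0$ with $0\to[A]\to[\BZ\xrightarrow{x}A]\to[\BZ]\to0$ along their common term $[A]$. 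So it suffices to prove that $\eta$ is \emph{not} in the image of $\pi_*$; then, by exactness, $\partial(\eta)$ is a nonzero element of $\Extn{\la M\ra}{2}{[\BZ]}{[T]}$, which is therefore nonzero.

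Next I would unwind $\pi_*$. An element of $\Ext{\la M\ra}{[\BZ]}{[G]}$ is the class of a short exact sequence $0\to[G]\to N\to[\BZ]\to0$ with $N$ in $\la M\ra$; such an $N$ is forced to be of the form $[\BZ\xrightarrow{y}G]$ for some $y\in G(\qb)_\BQ$, and $\pi_*$ carries its class to the class of $[\BZ\xrightarrow{\pi(y)}A]$. Since $x$ is non-torsion, it is therefore enough to establish the following: \emph{if $[\BZ\xrightarrow{y}G]$ lies in $\la M\ra$, then $y$ is torsion.} To prove this, I would use that the structure map of $M=[\BZ\xrightarrow{(0,x)}G\oplus A]$, and hence the structure map $w$ of every power $M^n=[\BZ^n\xrightarrow{w}G^n\oplus A^n]$, has image contained in the abelian subvariety $\{0\}\times A^n$ of $G^n\oplus A^n$. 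A $1$-motive $[\BZ\xrightarrow{y}G]$ in $\la M\ra$ is a subquotient $N_1/N_2$ of some $M^n$; invoking the description of subobjects and quotients in the category of $1$-motives up to isogeny (see \cite{Orgogozo,NesaPhD}), I may write $N_i=[L_i\to S_i]$ with $L_2\seq L_1\seq\BQ^n$ and sub-semiabelian varieties $S_2\seq S_1\seq G^n\oplus A^n$ satisfying $w(L_i\cap\BZ^n)\seq S_i$, so that $N_1/N_2=[L_1/L_2\to S_1/S_2]$ with structure map induced by $w$ and with $S_1/S_2$ isogenous to $G$. Then $w(L_1\cap\BZ^n)$ lies in the abelian variety $S_1\cap(\{0\}\times A^n)$, so the image of the lattice $L_1/L_2$ under the structure map of $N_1/N_2$, sitting inside $S_1/S_2\cong G$, is contained in the image of that abelian variety, i.e.\ in an abelian subvariety of $G$. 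But $G$, being a non-split extension of the simple abelian variety $A$ by the torus $T$, has no nonzero abelian subvariety: by Lemma \ref{lem:HomAG} we have $\Hom{}{A}{G}=0$, while a nonzero abelian subvariety of $G$ would, $A$ being simple, be isogenous to $A$ (its image under $G\to A$ is a nonzero abelian subvariety of $A$, hence all of $A$, and the kernel is trivial up to isogeny) and thus produce a nonzero element of $\Hom{}{A}{G}$. Hence the image in question is finite, which means $y$ is torsion, as wanted.

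The step I expect to be the main obstacle is the manipulation of subquotients of $M^n$: one has to make sure the weight filtration is strictly respected and, crucially, that the property ``the image of the lattice under the structure map is contained in an abelian subvariety of the semiabelian part'' really passes to $N_1/N_2$. Working up to isogeny throughout keeps the torsion bookkeeping under control, and everything else — the long exact $\Ext{}{}{}$-sequence, the splicing description of $\partial$, and the non-existence of abelian subvarieties of $G$ — is routine.
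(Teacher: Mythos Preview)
Your approach is essentially the same as the paper's: both apply $\Hom{\la M\ra}{[\BZ]}{-}$ to the exact sequence $0\to[T]\to[G]\to[A]\to0$, observe that the non-split Kummer class $[\BZ\xrightarrow{x}A]$ survives to $\Extn{\la M\ra}{2}{[\BZ]}{[T]}$ provided $\Ext{\la M\ra}{[\BZ]}{[G]}=0$, and then prove this vanishing by analysing subquotients of $M^n$ and exploiting that the lattice of $M^n$ maps into $\{0\}\times A^n$, together with Lemma~\ref{lem:HomAG}. The only stylistic difference is that the paper chases a single non-torsion point $z\mapsto z'\mapsto z''$ through the subquotient diagram to produce a nonzero map $A^n\to G$, whereas you track the whole abelian subvariety $S_1\cap(\{0\}\times A^n)$ and its image in $S_1/S_2\cong G$; both arrive at the same contradiction.
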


\begin{proof}
	Assume, by contradiction, that $\Extn{\abgen{M}}{2}{[\BZ]}{[T]}$ vanishes. From the exact sequence
	\[ \cdots \to \Ext{\abgen{M}}{[\BZ]}{[G]} \to \Ext{\abgen{M}}{[\BZ]}{[A]} \to \Extn{\abgen{M}}{2}{[\BZ]}{[T]} \to \cdots\]
	we see that, under this assumption, the map
	\[\Ext{\abgen{M}}{[\BZ]}{[G]} \to \Ext{\abgen{M}}{[\BZ]}{[A]}\]
	is surjective. Notice that $\Ext{\abgen{M}}{[\BZ]}{[A]}$ contains $[\BZ\xrightarrow{x} A]$, which is non-split by definition of $x$. So, to reach a contradiction and conclude the proof, it is sufficient to show that every $1$-motive of the form $[\BZ \to G]$ in $\abgen{M}$ splits as $[\BZ]\oplus [G]$, i.e., that $\Ext{\abgen{M}}{[\BZ]}{[G]}=0$. Assume, again by contradiction, that there is a non-split $1$-motive $[ \BZ \xrightarrow{z} G]$ in $\abgen{M}$. In particular, the $1$-motive $[ \BZ \xrightarrow{z} G]$ is a subquotient of a power of $M$, so there are $r,n \in \BN$, a semiabelian variety $G'$, and a pair of morphisms of $1$-motives of the form
	\[ [ \BZ \xrightarrow{z} G] \twoheadleftarrow [\BZ^r \to G'] \inj M^n.\]
	Written out more explicitly, this corresponds to a commutative diagram of the form
	\[\begin{tikzcd}
		\BZ \arrow[d, hook, "z"] & \BZ^r \arrow[r, hook] \arrow[d] \arrow[l, two heads] & \BZ^n \arrow[d, hook, "{(0,x^n)}"] \\
		G & G' \arrow[r, hook] \arrow[l, two heads] & G^n \oplus A^n.
	\end{tikzcd}\]
	By surjectivity and commutativity of the maps on the left, we can pull back the non-torsion point $z \in G$ to a non-torsion point $z' \in G'$ in the image of $\BZ^r$. By injectivity and commutativity of the maps on the right, $z'$ maps to a non-torsion point $z''$ of $\{0\} \oplus A^n$. In particular, $G' \cong \img{G'}$ intersects $\{0\} \oplus A^n$ non-trivially in an abelian subvariety $B$ containing $z''$. Since the category of abelian varieties up to isogeny is semisimple, there is a retraction of the inclusion $B \inj A^n$. So there is a composite map
	\[ A^n \surj B \inj G' \surj G,\]
	which is non-trivial because it maps $z'' \mapsto z$. This implies that $\Hom{}{A}{G}\neq0$, which contradicts Lemma \ref{lem:HomAG}, since $G$ is non-split and $A$ is simple. So we have reached a contradiction and we have proved that
	\[ \Extn{\abgen{M}}{2}{[\BZ]}{[T]} \neq 0.\]
\end{proof}

\bibliographystyle{amsalpha}

\begin{thebibliography}{5}
	
	\bibitem[And19]{AndLetter}
	Letter of Y.~Andr\'e, in \textit{Third kind elliptic integrals and $1$-motives}, by C.~Bertolin, Journal of Pure and Applied Algebra 224 (2020) 106396
	
	\bibitem[Bak66]{Bak}
	A.~Baker, \textit{Linear forms in the logarithms of algebraic numbers}, Mathematika, vol.~13, issue 2, 1966, pp.~204-216
	
	\bibitem[Ber98]{Bertrand}
	D.~Bertrand, \textit{Relative splitting of $1$-motives}, AMS, Contemporary Mathematics 210, 1998
	
	\bibitem[Bri17]{BrionIsogeny}
	M.~Brion, \textit{Commutative algebraic groups up to isogeny}, Doc.~Math.~22, pp.~679-725, 2017, \href{https://arxiv.org/abs/1602.00222v2}{arxiv:1602.00222v2}
	
	\bibitem[Del74]{Del3}
	P.~Deligne, \textit{Th\'eorie de Hodge: III}, Publications math\'ematiques de l'I.H.\'E.S., vol.~44, 1974, pp.~5-77
	
	\bibitem[Gro66]{GrothPerConj}
	A.~Grothendieck, \textit{On the de Rham cohomology of algebraic varieties}, Inst.~Hautes \'Etudes Sci.~Publ.~Math., vol.~29, 1966, pp.~95–103
	
	\bibitem[Hub20]{HubGalois}
	A.~Huber, \textit{Galois theory of periods}, M\"unster Journal of Mathematics 13, 2020, pp.~573–596
	
	\bibitem[HK24]{HK}
	A.~Huber and M.~Kalck, \textit{Dimension formulas for period spaces via motives and species}, preprint 2024, \href{https://arxiv.org/abs/2405.21053}{arXiv:2405.21053}
	
	\bibitem[HW22]{HW}
	A.~Huber and G.~W\"ustholz, \textit{Transcendence and linear relations of $1$-periods}, Cambridge Tracts in Mathematics 227, Cambridge University Press, 2022; see also \href{https://arxiv.org/abs/1805.10104v8}{arXiv:1805.10104v8}
	
	\bibitem[KZ01]{KZ}
	M.~Kontsevich, D.~Zagier, \textit{Periods}, IH\'ES, 2001, \href{http://www.ihes.fr/~maxim/TEXTS/Periods.pdf}{http://www.ihes.fr/~maxim/TEXTS/Periods.pdf}
	
	\bibitem[Mum70]{Mumford}
	D.~Mumford, \textit{Abelian varieties}, Tata Institute of Fundamental Research, Bombay, Oxford University Press, 1970
	
	\bibitem[Nes22]{NesaPhD}
	N.~Nesa, \textit{On linear spaces of periods, extensions, matrix coefficients and deficiency}, FreiDok Universit\"at Freiburg, 2022, \href{https://freidok.uni-freiburg.de/data/231903}{https://freidok.uni-freiburg.de/data/231903}
	
	\bibitem[Org04]{Orgogozo}
	F.~Orgogozo, \textit{Isomotifs de dimension inf\'erieure ou \'egale \`a un}, Manuscripta Math.~115 (3), 2004, pp.~339-360
	
\end{thebibliography}

\end{document}